\newtheorem{thm}{Theorem}[section]
\newtheorem{prop}[thm]{Proposition}
\newtheorem{lem}[thm]{Lemma}
\newtheorem{cor}[thm]{Corollary}
\theoremstyle{definition}
\newtheorem{defn}[thm]{Definition}
\theoremstyle{remark}
\newtheorem{rem}[thm]{Remark}
\numberwithin{equation}{section}
\newcommand{\bbR}{\mathbb{R}} 
\newcommand{\bbQ}{\mathbb{Q}}
\newcommand{\bbC}{\mathbb{C}}
\newcommand{\bbZ}{\mathbb{Z}}
\newcommand{\bbG}{\mathbb{G}}
\newcommand{\bbA}{\mathbb{A}}
\newcommand{\bbP}{\mathbb{P}}
\newcommand{\bbL}{\mathbb{L}}
\newcommand{\bbN}{\mathbb{N}}
\newcommand{\calG}{\mathcal{G}}
\newcommand{\calM}{\mathcal{M}}
\newcommand{\calF}{\mathcal{F}}
\newcommand{\calL}{\mathcal{L}}
\newcommand{\calO}{\mathcal{O}}
\newcommand{\calP}{\mathcal{P}}
\newcommand{\fkX}{\mathfrak{X}}
\newcommand{\lra}{\longrightarrow}
\newcommand{\lmt}{\longmapsto}
\newcommand{\conglra}{\stackrel{\cong}{\longrightarrow}}
\newcommand{\hooklongrightarrow}{\lhook\joinrel\longrightarrow}
\newcommand{\hlra}{\hooklongrightarrow}
\let \div = \relax
\DeclareMathOperator{\div}{div}
\DeclareMathOperator{\fin}{fin} 
\DeclareMathOperator{\ord}{ord} 
\DeclareMathOperator{\Div}{Div}
\DeclareMathOperator{\id}{id} 
\DeclareMathOperator{\Spec}{Spec} 
\DeclareMathOperator{\Proj}{Proj}
\DeclareMathOperator{\Sect}{Sect}
\DeclareMathOperator{\Pic}{Pic}
\DeclareMathOperator{\Cl}{Cl}
\DeclareMathOperator{\Res}{Res}
\DeclareMathOperator{\Sets}{Set}
\DeclareMathOperator{\Sch}{Sch}
\begin{document}

\title{Height zeta functions of projective bundles}

\author{Takuya Maruyama}
\address[Takuya Maruyama]{Graduate School of Mathematical Sciences, the University of Tokyo}
\email{maruyama@ms.u-tokyo.ac.jp}

\maketitle

\begin{abstract}
 We introduce a new approach to study height zeta functions of projective spaces and projective bundles.
 To study height zeta functions of projective spaces $Z(\bbP^n, H_{\calO(1)}; s)$, we apply the Riemann-Roch theorem of Arakelov vector bundles by van der Geer and Schoof to the integrand of an integral expression of $Z(\bbP^n, H_{\calO(1)}; s)$.
 We give a proof of the analytic continuation and functional equations of height zeta functions of projective spaces with respect to various height functions.
 Motivic analogues of these results are also proved.
 We also study height zeta functions of Hirzebruch surfaces.
%
 \end{abstract}

\section{Introduction}
\label{sect:Introduction}
Let $F$ be a number field.
Consider a projective variety $X$ over $F$
and a function $H \colon X(F) \lra \bbR$
such that the number
\[
 n(X,H;B) = \#\bigl\{P \in X(F) \bigm | H(P) \leq B \bigr\}
\]
is finite for each $B \in \bbR$.
We call a function with this property a height function on $X$.
Typically, one can construct a height function from a generically ample Arakelov line bundle on some model of $X$.
By definition, it consists of
(1) a line bundle $\calL$ on some proper model $\fkX \lra \Spec O_F$ of $X$ such that $L = \calL|_X$ is ample, and
(2) a Hermitian metric of the line bundle $L \otimes_F F_v$ on $X \otimes_F F_v$ for each complex embedding $v$ of $F$.
Then for a point $P \in X(F)$, the $1$-dimensional $F$-vector space $P^*L$ has a natural structure of an Arakelov line bundle over $\Spec O_F$.
An $F$-vector space equipped with a structure of an Arakelov vector bundle over $\Spec O_F$ is called an Arakelov vector bundle over $F$ for short.
There is a notion of the norm $N(L)$ of an Arakelov line bundle $L$ over $F$ (see Definition \ref{defn:chern class of line bundle}).
The height $H_L(P)$ of $P$ with respect to $L$ is defined as
\[
 H_L(P) = N(P^*L).
\]

Let $H_L$ be a height function on $X$ constructed as above.
We are interested in the asymptotic behaviour of the counting function $n(X,H_L;B)$ as $B \to \infty$.
It represents one aspect of the distribution of rational points of $X$ with respect to $H_L$.
A useful tool to study such problems is a Dirichlet series called the height zeta function defined by
\[
 Z(X,H_L;s) = \sum_{P \in X(F)}H_L(P)^{-s}.
\]
By a Tauberian-type theorem, one derives the asymptotic behaviour of $n(X,H_L;B)$ for $B \to \infty$ from the behaviour of $Z(X,H_L;s)$ around its abscissa of convergence.
See Theorem \ref{thm:Tauberian} for the precise statement of the Tauberian theorem used in this line.

In this paper, we study analytic properties of height zeta functions of projective spaces and Hirzebruch surfaces.
We first state our results for projective spaces.
Let $n$ be a non-negative integer and set $r = n+1$.
Let $V$ be an Arakelov vector bundle over $F$ of rank $r$.
Consider the projective space $\bbP(V)$ of all lines in $V$.
Then the line bundle $\calO_{\bbP(V)}(1)$ carries a natural structure of an Arakelov line bundle, and therefore defines a height function $H_{\calO(1)}$.
\begin{thm}
\label{thm:intro main theorem}
 In the above situation, 
 let $Z\bigl(\bbP(V),s\bigr)$ be the height zeta function
 of $\bbP(V)$ with respect to $H_{\calO(1)}$.
 \begin{enumerate}
  \item The Dirichlet series $Z\bigl(\bbP(V),s\bigr)$ becomes a holomorphic function on the domain $\bigl\{s \in \bbC \bigm| \Re(s) > r\bigr\}$, and it is analytically continued to a meromorphic function on the whole complex plane.
  \item In the domain $\{s \in \bbC \mid \Re(s) > 1\}$,
	the function $Z\bigl(\bbP(V),s\bigr)$ has an unique simple pole at $s = r$ with residue
	\[
	\Res_{s=r}Z\bigl(\bbP(V),s\bigr)=\frac{\alpha N(V)}{w|\Delta|^{\frac{r}{2}}\xi(r)}.
	\]
	Here, $N(V)$ is the norm of the Arakelov line bundle $\det(V)$, $w$ is the number of roots of unity in $F$, $\alpha = Rh$ is the product of the regulator $R$ and the class number $h$ of $F$, $\Delta$ is the discriminant of $F$, and $\xi(s)$ is the Dedekind zeta function of $F$ multiplied by a gamma factor defined in \S\ref{subs:Topologies and measures}.
  \item We have the following functional equations:
 \[
 N(V)^{-\frac{1}{2}}|\Delta|^{\frac{s}{2}}\xi(s)Z\bigl(\bbP(V),s\bigr)
 = N(V^{\vee})^{-\frac{1}{2}}|\Delta|^{\frac{r-s}{2}}\xi(r-s)Z\bigl(\bbP(V^{\vee}),r-s\bigr).
 \]
 \end{enumerate}
\end{thm}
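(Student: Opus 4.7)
The plan is to derive all three assertions simultaneously from a single integral representation of $Z\bigl(\bbP(V),s\bigr)$ over the group $\widehat{\Pic}(F)$ of Arakelov line-bundle classes over $F$, to whose integrand the Arakelov Riemann--Roch theorem of van der Geer and Schoof applies decisively.

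First, I would rewrite the height zeta function as a sum over $F$-lines in $V$: each $P \in \bbP(V)(F)$ corresponds to a line $\ell \subset V$ whose induced Arakelov structure satisfies $H_{\calO(1)}(P) = N(\ell^{\vee})$. Partitioning the set of such lines by their Arakelov isomorphism class and summing over each fibre converts the sum into an integral of the form
\[
 Z\bigl(\bbP(V),s\bigr) = \frac{1}{w}\int_{\widehat{\Pic}(F)} \bigl(\#H^0(V \otimes M^{-1}) - 1\bigr)\, N(M)^{s} \, d[M],
\]
where $\#H^0$ denotes the generalised (theta-series) zeroth cohomology of an Arakelov vector bundle in the sense of van der Geer and Schoof, $d[M]$ is the natural Haar measure on $\widehat{\Pic}(F)$, and the subtraction of $1$ removes the zero section. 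Absolute convergence for $\Re(s) > r$ follows from classical theta-series estimates together with the finite-volume description of the degree-zero part of $\widehat{\Pic}(F)$ via Dirichlet's unit theorem.

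Second, the Arakelov Riemann--Roch theorem for $V \otimes M^{-1}$ allows one to replace $\#H^0(V \otimes M^{-1})$ in the integrand with a sum of a ``polar'' term $N(V)\, N(M)^{-r}\, |\Delta|^{-r/2}$ and a ``dual'' term built from $\#H^0(V^{\vee}\otimes M)$ via Arakelov Serre duality. The dual term, after the substitution $M \mapsto M^{-1}$, is precisely the analogous integral for $Z\bigl(\bbP(V^{\vee}), r-s\bigr)$ multiplied by the appropriate discriminant and gamma factors; this identifies the functional equation (iii) directly. Since the dual integral is absolutely convergent in $\Re(s) < 0$, the two integral representations glue to give the meromorphic continuation (i) to all of $\bbC$.

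Third, the polar term reduces to an elementary integral whose degree-direction yields a simple pole at $s = r$, with residue computed from the finite volume $\alpha/w$ of the degree-zero part of $\widehat{\Pic}(F)$ (the analogue of the Dirichlet class number formula) and from the $\xi(r)$ factor normalising the theta-series measure, producing exactly the residue stated in (ii). No other poles arise in $\Re(s) > 1$ because the dual part is regular there. The main obstacle is the careful book-keeping of the first step: one must match the normalisation of the Haar measure on $\widehat{\Pic}(F)$ with the van der Geer--Schoof conventions so that the factors $w$, $|\Delta|^{r/2}$, and $\xi(r)$ in the residue formula and the functional equation emerge exactly as stated. Once this bookkeeping is in place, (i), (ii) and (iii) all follow formally from the symmetric shape of the Riemann--Roch formula under $(V,M) \leftrightarrow (V^{\vee}, M^{-1})$.
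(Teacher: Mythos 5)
Your overall strategy (an integral representation over the Arakelov Picard group with the van der Geer--Schoof $\#H^0$ in the integrand, then Riemann--Roch to get the symmetry and the polar terms) is the right one and is essentially the paper's, but your starting identity is wrong in a way that matters. The correct relation is $w\,\xi(s)\,Z\bigl(\bbP(V),s\bigr)=\int_{\Pic(F)}\bigl(\#H^0(V\otimes M^{-1})-1\bigr)N(M)^{s}\,dM$, not $w\,Z\bigl(\bbP(V),s\bigr)=\int(\cdots)$. You cannot ``partition the lines $\ell\subset V$ by Arakelov isomorphism class and sum over each fibre'': the class group is a continuum and each fibre is countable, so that partition produces no integrand. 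What actually happens is an unfolding: a nonzero weighted section of $V\otimes M^{-1}$ determines a line $\ell\subset V$ together with a nonzero weighted section of $\ell\otimes M^{-1}$, and for fixed $\ell$ the integral of that weighted count against $N(M)^{s}$ is $w\,\xi(s)\,N(\ell^{\vee})^{-s}$ (this is exactly the identity $\xi(s)=w^{-1}\int_{\Pic(F)}N(L)^{-s}\varphi(L)\,dL$). The factor $\xi(s)$ is therefore not a normalisation of the measure that can be absorbed by bookkeeping; it is the source of the $\xi(s)$, $\xi(r-s)$ in the functional equation (iii) and of the $\xi(r)$ in the residue (ii), and it is also needed for (ii): one must divide the symmetric expression by $\xi(s)$ and invoke the non-vanishing of the completed zeta function in $\Re(s)>1$ to conclude that $s=r$ is the unique pole there.

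The second gap is in the continuation step. If you apply Riemann--Roch to the integrand over all of $\Pic(F)$, the ``polar'' term you isolate is a constant multiple of $N(M)^{s-r}$ (and the $-1$ from $\#H^0-1$ contributes $N(M)^{s}$), and $\int_{\Pic(F)}N(M)^{s-r}\,dM=\alpha\int_{\bbR}e^{(s-r)t}\,dt$ diverges for every $s$, since the degree runs over all of $\bbR$; so the ``elementary integral with a simple pole at $s=r$'' does not exist as you set it up, and gluing two representations convergent on the disjoint half-planes $\Re(s)>r$ and $\Re(s)<0$ is not by itself a continuation argument. What is missing is the Hecke-style splitting of the domain at $N(L)=\sqrt{|\Delta|}$ (the sets $E_{\pm}$ of the paper): on the low-degree half the integral is entire by the vanishing estimate of Proposition \ref{prop:R-R and vanishing} (ii) (doubly exponential decay of $\varphi$), which your sketch never invokes; on the high-degree half one substitutes $L\mapsto L^{\vee}\otimes\omega$ and only then applies Riemann--Roch, so that the two constant terms are integrated over a half-line and produce exactly the meromorphic terms $N(V)^{1/2}\alpha/(s-r)$ and $-N(V)^{-1/2}\alpha/s$, while the remaining integrals are entire and manifestly symmetric under $(V,s)\leftrightarrow(V^{\vee},r-s)$. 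With the corrected identity and this decomposition your outline becomes the paper's proof; without them, neither the stated functional equation nor the residue formula can be reached.
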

When $V$ is a trivial Arakelov vector bundle $O_F^{\oplus r}$,
the $F$-variety $\bbP(V)$ is canonically isomorphic to $\bbP^n$,
and the height used in Theorem \ref{thm:intro main theorem} is equal to the classical height
\[
 H(P) = \prod_{v\colon\text{infinite places}}\bigl(\sum_{0\leq i \leq n}|x_i|^2_v\bigr)^{\frac{n_v}{2}}\prod_{v\colon\text{finite places}}\sup_{0 \leq i \leq n}|x_i|_v^{n_v}
\]
for a point $P \in \bbP^n(F)$ with homogeneous coordinates $[x_0:\cdots:x_n]$. 
See \S\ref{ssect:Number fields} for notation about absolute values and $n_v$'s.
In this case, the statements of Theorem \ref{thm:intro main theorem} is known (for example, this is a very special case of results in \cite{FrankeManinTschinkel} where they study height zeta functions of generalized flag varieties).
In this paper, we generalize these results to arbitrary Arakelov vector bundles using the Riemann-Roch theorem due to van der Geer and Schoof \cite{vanderGeerSchoof}.
By using this generalization, we can also handle height zeta functions of Hirzebruch surfaces.
This will be explained below.

For Hirzebruch surfaces, we prove the following.
We consider only Hirzebruch surfaces of degree greater than one.
Fix an Arakelov vector bundle $V$ of rank $2$
and let $\bbP^1$ denote the projective space $\bbP(V)$.
For an integer $e \geq 2$,
consider the Hirzebruch surface
 $\pi \colon F_e = \bbP\bigl(\calO_{\bbP^1}\oplus\calO_{\bbP^1}(e)\bigr) \lra \bbP^1$ of degree $e$.
For a pair of integers $(a,b) \in \bbZ^{2}$,
let $\calO(a,b)$ denote the line bundle $\calO_{F_e}(a)\otimes\pi^*\calO_{\bbP^1}(b)$ on $F_e$.
Here, $\calO_{F_e}(1)$ is the Serre's twisting sheaf associated to the projective bundle construction of $F_e$.
The line bundle $\calO(a,b)$ is ample if and only if $a>0$ and $b>ae$.
Assume these inequalities.
The line bundle $\calO(a,b)$ carries a natural structure of an Arakelov line bundle,
and therefore defines a height function on $F_e$.
\begin{thm}
\label{thm:intro Hirzebruch main theorem}
In the above situation, 
 let $Z(F_e,s)$ be the height zeta function of $F_e$ with respect to $H_{\calO(a,b)}$.
 Then it becomes a meromorphic function in the domain $D = \bigl\{s \in \bbC \bigm| \Re(s) > \max\{1/a,(e+2)/b\} \bigr\}$.
 There are the following two contributions to the poles of $Z(F_e,s)$ in $D$:
 \begin{enumerate}
  \item $\displaystyle \frac{\alpha Z(\bbP^1,2b/a-e)}{w |\Delta| \xi(2)}\cdot\frac{1}{as-2}\quad$ around $s = 2/a$.
  \item $\displaystyle \frac{\alpha N(V)}{w|\Delta|\xi(2)}\cdot\frac{1}{(b-ae)s-2}\quad$ around $s = 2/(b-ea)$ if it is in $D$.
 \end{enumerate} 
\end{thm}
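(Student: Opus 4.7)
The plan is to use the fibration $\pi\colon F_e \to \bbP^1$ and apply Theorem~\ref{thm:intro main theorem} to each fiber. Writing $h_1 = H_{\calO_{\bbP^1}(1)}$ and, for each $P \in \bbP^1(F)$, $W_P := P^*\bigl(\calO_{\bbP^1} \oplus \calO_{\bbP^1}(e)\bigr)$, the fiber $\pi^{-1}(P)$ is canonically $\bbP(W_P)$, a rank-$2$ Arakelov projective line with $N(\det W_P) = h_1(P)^e$, and $H_{\calO(a,b)}(Q) = H_{\calO_{\bbP(W_P)}(1)}(Q)^{a}\cdot h_1(P)^{b}$ for $Q$ in this fiber. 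Consequently, in the region of absolute convergence,
\[
 Z(F_e, s) = \sum_{P \in \bbP^1(F)} h_1(P)^{-bs}\, Z\bigl(\bbP(W_P), as\bigr).
\]

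Next I will separate the contributions of the two rational points in each fiber coming from the direct-sum structure $W_P = \calO \oplus P^*\calO_{\bbP^1}(e)$: the line $\calO \subset W_P$, of fiber-height $1$, and the line $P^*\calO_{\bbP^1}(e) \subset W_P$, of fiber-height $h_1(P)^{-e}$ (read off from the Arakelov structure the line inherits from $W_P$). This yields
\[
 Z(F_e, s) = Z(\bbP^1, bs) + Z\bigl(\bbP^1, (b-ae)s\bigr) + G(s),
\]
with $G(s) := \sum_{P \in \bbP^1(F)} h_1(P)^{-bs}\bigl[\,Z(\bbP(W_P), as) - 1 - h_1(P)^{eas}\,\bigr]$. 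By Theorem~\ref{thm:intro main theorem}, $Z(\bbP^1, bs)$ is holomorphic on $D$ (its only pole, at $s = 2/b$, lies outside $D$ since $2/b \leq (e+2)/b$), while $Z\bigl(\bbP^1, (b-ae)s\bigr)$ has a simple pole at $s = 2/(b-ae)$ of residue $\alpha N(V)/\bigl((b-ae)\,w|\Delta|\xi(2)\bigr)$; this is precisely contribution~(ii), present exactly when this value lies in $D$.

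Contribution~(i) will come from $G(s)$. Applying Theorem~\ref{thm:intro main theorem} to each $W_P$, the function $Z(\bbP(W_P), t)$ has a simple pole at $t = 2$ with residue $\alpha h_1(P)^{e}/\bigl(w|\Delta|\xi(2)\bigr)$. Summing these principal parts formally inside $G(s)$ produces $\alpha\, Z(\bbP^1, bs - e)\big/\bigl(w|\Delta|\xi(2)(as - 2)\bigr)$; evaluating $Z(\bbP^1, bs-e)$ at $s = 2/a$ recovers contribution~(i), and this is unproblematic because $e \geq 2$ and $b > ae$ force $2b/a - e > 2$, so $s = 2/a$ lies in the region of absolute convergence of $Z(\bbP^1, bs - e)$.

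The hard part will be making these formal manipulations rigorous: I must prove that $G(s)$ extends meromorphically to all of $D$ and carries no pole other than the one at $s = 2/a$ just isolated. The crux is a uniform-in-$P$ bound on $Z(\bbP(W_P), as)$ minus its principal part at $s = 2/a$ -- of order at most $h_1(P)^{e}$ up to factors of slow growth -- so that the sum over $P \in \bbP^1(F)$, whose counting function grows like $h_1^{2}$, converges for $\Re(s) > (e+2)/b$. The natural source of such a bound is the integral representation of $Z(\bbP(V), s)$ furnished by the van der Geer--Schoof Riemann--Roch theorem that underlies Theorem~\ref{thm:intro main theorem}; tracking the dependence of that integral on $W_P$, and truncating at volume one in the usual Tate-style way, should yield the required uniform estimate and legitimize the exchange of summation over $P$ with meromorphic continuation.
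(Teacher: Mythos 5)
Your decomposition is sound, and the bookkeeping checks out: the point $\calO\subset W_P$ contributes $Z(\bbP^1,bs)$, the minimal-section point $P^*\calO_{\bbP^1}(e)\subset W_P$ (fiber height $h_1(P)^{-e}$) contributes $Z\bigl(\bbP^1,(b-ae)s\bigr)$, whose pole at $s=2/(b-ae)$ has exactly the residue in (ii), and summing the fiberwise principal parts $\frac{\alpha h_1(P)^e}{w|\Delta|\xi(2)(as-2)}$ against $h_1(P)^{-bs}$ gives $\frac{\alpha Z(\bbP^1,bs-e)}{w|\Delta|\xi(2)(as-2)}$, which converges on $D$ (since $\Re(bs)-e>2$ there) and yields (i). This is a genuinely different, and arguably more conceptual, way to exhibit pole (ii): the paper also starts from the fibration identity $Z(F_e,s)=\sum_Q H_{\bbP^1}(Q)^{-bs}Z(\pi^{-1}(Q),as)$, but it never isolates the two tautological sections; instead it substitutes the integral representation (\ref{eq:star}) into every fiber and the $2/(b-ae)$ pole emerges from the piece $\int_{E_-}N(L)^{-as}\varphi\bigl(L\otimes Q^*\calO_{\bbP^1}(e)\bigr)dL$, i.e.\ from sections accumulating along the minimal section, after a second application of Riemann--Roch. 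Your route gets (ii) directly from Theorem \ref{thm:intro main theorem} applied to the base, which matches the paper's own discussion that the $2/(b-ae)$ pole is carried by the minimal section.

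The gap is that the step you yourself flag as the crux -- the locally uniform bound, for $\Re(s)>\max\{1/a,(e+2)/b\}$, of the form $\bigl|Z(\bbP(W_P),as)-1-h_1(P)^{eas}-\tfrac{\alpha h_1(P)^e}{w|\Delta|\xi(2)(as-2)}\bigr|\ll h_1(P)^e$ -- is only asserted, and it is precisely the bulk of the paper's proof (note also that your wording ``$Z(\bbP(W_P),as)$ minus its principal part is $O(h_1(P)^e)$'' is literally false on $D$, since $h_1(P)^{ea\Re(s)}>h_1(P)^e$ whenever $\Re(s)>1/a$; the subtraction of the minimal-section term $h_1(P)^{eas}$, as in your definition of $G$, is essential). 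To extract this bound from (\ref{eq:star}) one needs, beyond the generic ``Tate-style truncation'', two $Q$-uniform devices that the paper supplies and that do not come for free: (a) the term produced by the Riemann--Roch reflection of the $E_+$ range involves $\varphi\bigl(L\oplus(L\otimes Q^*\calO_{\bbP^1}(-e))\bigr)$, which is dominated $Q$-uniformly by $\varphi\bigl(L\oplus(L\otimes V^{\otimes e})\bigr)$ via the embedding $Q^*\calO_{\bbP^1}(-e)\hlra V^{\otimes e}$, so that Proposition \ref{prop:R-R and vanishing}(ii) applies; and (b) the cross term $\varphi(L)\varphi\bigl(L\otimes Q^*\calO_{\bbP^1}(e)\bigr)$ in $\varphi\bigl(L\oplus(L\otimes Q^*\calO_{\bbP^1}(e))\bigr)$ must be split over the regions $N(L)H_{\bbP^1}(Q)^{e/2}\lessgtr\sqrt{|\Delta|}$, using the doubly exponential decay of Proposition \ref{prop:R-R and vanishing}(ii) on one side and the linear bound of Proposition \ref{prop:R-R and vanishing}(iii) (plus another Riemann--Roch reflection) on the other. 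Until these estimates are carried out, your argument establishes the shape of the answer but not the meromorphy of $G(s)$ on $D$ nor the absence of further poles there, which is the actual content of the theorem.
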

In particular,
there is a decomposition of the ample cone $\Lambda_{\text{ample}}(F_e)$ into two cones such that 
the asymptotic behaviour of the counting function $n(F_e,H_{\calO(a,b)};B)$ varies on which cone the line bundle $\calO(a,b)$ belongs to.
For more discussions on this result, see \S\ref{ssect:Discussions}.

In \S\ref{sect:Motivic case}, we treat other versions of height zeta functions.
To define these functions,
fix a proper smooth curve $C$ over a field $k$.
Consider a proper scheme $X \lra C$
and a relatively ample line bundle $L$ on $X$.
Then for a section $P \in X(C)$, 
we define the logarithmic height $h_L(P)$ as
\[
 h_L(P) = \deg (P^*L).
\]

If $k$ is finite, then the number of points $P \in X(C)$ with bounded heights is finite.
Therefore we can define the geometric height zeta function of $X$ with respect to $h_L$ as the power series
\[
 Z_{\text{geom}}(X,h_L;t) = \sum_{P\in X(F)}t^{h_L(P)} \in \bbZ((t)).
\]
For a general field $k$, we can define the motivic height zeta function
$Z_{\text{mot}}(X,h_L;t)$ of $X$
as a power series with coefficients in the Grothendieck ring $K$ of varieties over $k$.
When $k$ is finite, there is a ring homomorphism $\mu \colon K \lra \bbZ$ such that $\mu\bigl([X]\bigr) = \#X(k)$ for any $k$-variety $X$.
We also write $\mu$ for the ring homomorphism $K((t)) \lra \bbZ((t))$ induced by $\mu$.
Then we have $\mu\bigl(Z_{\text{mot}}(X,h_L;t)\bigr) = Z_{\text{geom}}(X,h_L;t)$.

In \cite{Wan}, Daqing Wan studies the geometric height zeta function
of trivial projective bundles $X = \bbP^n \times C \lra C$ with respect to $L = \calO_{X/C}(1)$.
For example, he proves that $Z_{\text{geom}}\bigl(\bbP^n \times C,h_{\calO(1)};t\bigr)$ is rational in $t$ and describes its denominator using the Riemann-Roch theorem.
In this paper, we study motivic height zeta functions
when $X \lra C$ is a projective bundle which do not necessarily trivial.
To state our results, let $\widetilde{K}$ be the quotient of $K$ by an ideal generated by $[X]-[Y]$ for all radicial surjective morphisms $f \colon X \lra Y$.
Let $\bbL$ denote the image of $\bbA^1$ in $\widetilde{K}$, and let $\calM = \widetilde{K}[\bbL^{-1}]$ be the localization.
 Let $\zeta(t) \in K[[t]]$ denote the Kapranov's motivic zeta function of $C$.
 It is known that $\zeta(t)$ is a rational function in $\calM(t)$.
\begin{thm}
\label{thm:intro motivic main theorem}
 Let $V$ be a vector bundle of rank $r$ over $C$.
 We denote the motivic height zeta function of $\bbP(V) \lra C$ with respect to $\calL = \calO_{\bbP(V)}(1)$
 by $Z\bigl(\bbP(V),t\bigr)$.
 \begin{enumerate}
  \item The product $\zeta(t)Z\bigl(\bbP(V),t\bigr)$ is a rational function in $\calM(t)$ with the denominator $(t-1)(t-\bbL^{-r})$.
	Since $\zeta(t)$ is rational in $\calM((t))$, we also have the rationality of $Z\bigl(\bbP(V),t\bigr)$ in $\calM((t))$.
  \item The value of $(t-1)(t-\bbL^{-r})\zeta(t)Z\bigl(\bbP(V),t\bigr)$ at $t = \bbL^{-r}$
	is $\bbL^{-rg-1+\deg V}\bigl(1-[\bbP^{-r}]\bigr)[J]$.
	Here, $J$ denotes the Jacobian variety of $C$,
	and the negative dimensional projective space $[\bbP^{-r}]$ is defined by
	\[
	 [\bbP^{-r}] = -\bbL^{-(r-1)} - \bbL^{-(r-2)} - \cdots - \bbL^{-1}.
	\]
  \item We have the following functional equations:
	\[
	\zeta(t)Z\bigl(\bbP(V),t\bigr) = \bbL^{\deg V+r(g-1)}t^{2g-2}\zeta(\bbL^{-r}t^{-1})Z\bigl(\bbP(V^{\vee}),\bbL^{-r}t^{-1}\bigr).
	\]
 \end{enumerate}
\end{thm}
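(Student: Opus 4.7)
The plan is to adapt the arithmetic proof of Theorem~\ref{thm:intro main theorem} to the motivic setting, replacing the van der Geer--Schoof Riemann--Roch theorem by Serre duality for vector bundles on $C$. First, sections of $\pi\colon\bbP(V)\to C$ are identified with line-bundle data: a section $s$ corresponds to a saturated line subbundle $M\hookrightarrow V$, equivalently by dualization to a surjection $V^\vee\twoheadrightarrow L$ with $L = M^{-1}$ and $d := h_{\calO(1)}(s) = \deg L$. The motivic zeta function thus becomes
\[
Z\bigl(\bbP(V),t\bigr) = \frac{1}{\bbL-1}\sum_d[\widetilde{B}_d]\,t^d,\qquad \widetilde{B}_d := \bigl\{(L,\phi)\bigm|L\in\Pic^d(C),\ \phi\colon V^\vee\twoheadrightarrow L\bigr\}.
\]
Any nonzero $\phi\colon V^\vee\to L$ factors uniquely as $V^\vee\twoheadrightarrow L(-D)\hookrightarrow L$ for a unique effective divisor $D\geq 0$; collecting classes by total degree and recalling $\zeta(t) = \sum_k[C^{(k)}]\,t^k$ produces the central factorization
\[
\zeta(t)(\bbL-1)\,Z\bigl(\bbP(V),t\bigr) = A(V)(t), \qquad A(V)(t) := \sum_d[\widetilde{A}_d]\,t^d,
\]
with $\widetilde{A}_d := \bigl\{(L,\phi)\bigm|L\in\Pic^d,\ 0\neq\phi\in H^0(V\otimes L)\bigr\}$.

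Next, I compute $A(V)(t)$ in closed form. For $d\gg 0$, Serre vanishing gives $H^1(V\otimes L) = 0$ uniformly on $\Pic^d$, so $H^0(V\otimes L)$ has constant dimension $\chi(d) := \deg V + rd + r(1-g)$ and assembles into a rank-$\chi(d)$ vector bundle over $\Pic^d$, forcing $[\widetilde{A}_d] = (\bbL^{\chi(d)}-1)[J]$; for $d\ll 0$, $H^0$ vanishes uniformly and $[\widetilde{A}_d] = 0$. Splitting $A(V)(t)$ at a sufficiently large cut-off $d_0$ and summing the geometric tails gives
\[
A(V)(t) = \frac{\bbL^{\deg V + r(1-g)}(\bbL^r t)^{d_0+1}[J]}{1 - \bbL^r t} - \frac{t^{d_0+1}[J]}{1-t} + Q(t)
\]
for some polynomial $Q(t)\in\calM[t]$, which proves rationality with the stated denominator $(t-1)(t-\bbL^{-r})$; this is part~(i). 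Extracting the residue at $t = \bbL^{-r}$ from the first term only, multiplying by $(t-1)\bigr|_{t=\bbL^{-r}} = \bbL^{-r}-1$, and dividing by $\bbL-1$, one obtains
\[
(t-1)(t-\bbL^{-r})\zeta(t)Z\bigl(\bbP(V),t\bigr)\bigr|_{t=\bbL^{-r}} = \frac{\bbL^{\deg V - rg}(1-\bbL^{-r})[J]}{\bbL-1} = \bbL^{\deg V - rg - 1}\bigl(1-[\bbP^{-r}]\bigr)[J],
\]
using $(1-\bbL^{-r})/(\bbL-1) = \bbL^{-1}\sum_{i=0}^{r-1}\bbL^{-i} = \bbL^{-1}(1-[\bbP^{-r}])$; this is part~(ii).

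For the functional equation~(iii) the crucial input is a motivic Serre duality. Let $\beta^W_e := \bigl[\{(L,\phi):L\in\Pic^e,\ \phi\in H^0(W\otimes L)\}\bigr]\in\calM$, and stratify $\Pic^d$ by the value of $h^0(V\otimes L)$: on each stratum the total space of $H^0$ is a vector bundle whose class equals $\bbL^{h^0}$ times the class of the stratum. Combining the isomorphism $\Pic^d\cong\Pic^{2g-2-d}$ via $L\mapsto\omega_C\otimes L^{-1}$ with classical Serre duality $h^1(V\otimes L) = h^0(V^\vee\otimes\omega_C\otimes L^{-1})$ identifies these stratifications on the two Picard varieties and yields
\[
\beta^V_d = \bbL^{\chi(d)}\,\beta^{V^\vee}_{2g-2-d}.
\]
Since $[\widetilde{A}_d] = \beta^V_d - [J]$, this gives the per-coefficient identity $[\widetilde{A}_d(V)] = \bbL^{\chi(d)}[\widetilde{A}_{2g-2-d}(V^\vee)] + (\bbL^{\chi(d)} - 1)[J]$. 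Multiplying by $t^d$ and summing, the substitution $e = 2g-2-d$ transforms the first piece into $\bbL^{\deg V + r(g-1)}\,t^{2g-2}\,A(V^\vee)(\bbL^{-r}t^{-1})$, while the correction $\sum_d(\bbL^{\chi(d)}-1)[J]\,t^d$ vanishes as a rational function, its two geometric pieces having cancelling pole-and-polynomial parts at $t = 1$ and $t = \bbL^{-r}$. Combining with the analogous relation for $V^\vee$ and dividing by $\zeta(t)(\bbL-1)$ yields the functional equation. The main obstacle is rigorously justifying this last step: the bilateral correction series is literally divergent, so the identity must be verified at the level of rational functions by comparing poles, residues, and polynomial remainders via the explicit closed form above, or by a careful truncation-and-limit argument.
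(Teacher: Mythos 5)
Your strategy is essentially the paper's own proof rewritten on the affine cone: your $A(V)(t)=\sum_d[\widetilde A_d]t^d$ is the $\bbG_m$-torsor version of the series $\sum_n[X_n(V)]t^n$ that the paper uses, where $X_n(V)$ is the bundle over $\Pic^n(C)$ with fibres $\bbP\bigl(H^0(C,\calL\otimes V)\bigr)$; your Serre-vanishing computation for $d\gg0$, the stratification-plus-Serre-duality identity $\beta^V_d=\bbL^{\chi(d)}\beta^{V^\vee}_{2g-2-d}$, and the handling of the divergent bilateral correction series correspond to the paper's motivic Riemann--Roch, its vanishing statement, and its lemma on the series $\sum_n[\bbP^{an+b}]t^n$; the numerics (value at $t=\bbL^{-r}$, exponent $\deg V+r(g-1)$) check out. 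However, two steps are genuine gaps. First, you divide by $\bbL-1$ throughout: in $Z=\frac{1}{\bbL-1}\sum_d[\widetilde B_d]t^d$, in the residue computation, and in ``dividing by $\zeta(t)(\bbL-1)$''. But $\calM=\widetilde{K}[\bbL^{-1}]$ inverts only $\bbL$; the class $[\bbG_m]=\bbL-1$ is not invertible in $\calM$ (any motivic measure such as the $E$-polynomial sends it to a non-unit), and it is not known to be a non-zero-divisor, so it cannot be cancelled either. As written, your argument establishes the three assertions only after multiplication by $\bbL-1$. The fix is exactly the paper's formulation: projectivize, i.e.\ work with the bundles whose fibres are $\bbP\bigl(H^0(C,\calL\otimes V)\bigr)$ over $\Pic^d$, so that $\Sect_d\bigl(\bbP(V)\bigr)\times C_{n-d}$ maps to them directly, the count $\bbL^{\chi(d)}-1$ becomes $[\bbP^{\chi(d)-1}]$, and no division by $\bbL-1$ ever occurs.

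Second, the central factorization $\zeta(t)(\bbL-1)Z\bigl(\bbP(V),t\bigr)=A(V)(t)$ is asserted from the pointwise bijection obtained by factoring $\phi\colon V^{\vee}\lra L$ through its image $L(-D)$. A bijection on field-valued points does not by itself give an equality of classes in the Grothendieck ring; this is precisely why the paper works in $\widetilde{K}$ and devotes its main proposition to constructing an actual morphism $\coprod_d\Sect_d\bigl(\bbP(V)\bigr)\times C_{n-d}\lra X_n(V)$ and proving that it is radicial and surjective, together with the base-change lemma (Lemma \ref{lem:coh and base chg}) needed to realize the ``space of pairs $(L,\phi)$'' as an honest variety, namely the ($\bbP^{\vee}$ of the) sheaf $R^1p_{2*}$ of a twist of the Poincar\'e sheaf, fibrewise identified with $H^0(C,\calL\otimes V)$. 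Your proposal needs the same construction and verification; it is the technical core of the proof, not a formality, and it is missing. By contrast, the divergence issue you flag in (iii) is real but standard: the paper disposes of it with the lemma asserting that $\sum_{n\geq0}[\bbP^{an+b}]t^n$ and $\sum_{n<0}[\bbP^{an+b}]t^n$ are rational functions summing to zero, which is the rigorous form of your ``cancelling pole-and-polynomial parts'' remark, so that part of your plan does go through once the two gaps above are repaired.
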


This is a precise motivic analogue of Theorem \ref{thm:intro main theorem}.

Our approach in the proof of Theorem \ref{thm:intro main theorem} is an Arakelov theoretic analogue of (a refined version of) Wan's arguments in \cite{Wan}.
The Riemann-Roch theorem in his arguments is repleced by van der Geer and Schoof's version of Tate's arithmetic Riemann-Roch theorem given in \cite{vanderGeerSchoof}.
In \S2, we recall van der Geer and Schoof's theory of \cite{vanderGeerSchoof}, which also plays an important role in the proof of \ref{thm:intro Hirzebruch main theorem}.

\section{Arakelov theory}
In this section, we introduce notiation in Arakelov theory of arithmetic curves and recall results from \cite{vanderGeerSchoof}.

\subsection{Number fields}\label{ssect:Number fields}
Throughout in this paper, we fix a number field $F$ and use the following notation:
$O_F$ is the ring of integers,
$\mu_F$ is the group of roots of unity,
$w = \#\mu_F$, 
$\Delta$ is the discriminant,
$R$ is the regulator, $h$ is the class number,
$\alpha = Rh$, and
$\zeta(s)$ is the Dedekind zeta function.
The symbol $v$ always denotes a finite or infinite place of $F$.
The number of real (resp. complex) places is denoted as $r_1$ (resp. $r_2$).
For a place $v$, the function $|\cdot|_v \colon F \lra \bbR$ is the absolute value of $F$ which extends the standard $p$-adic or archimedean absolute value on $\bbQ$.
$F_v$ is the completion of $F$ along $|\cdot|_v$, and
$n_v$ is the local degree at $v$ (i.e., $[F_v:\bbQ_w]$ for a place $w$ of $\bbQ$ such that $v|w$).
For a finite place $v$, the norm $N(v)$ is the cardinality of the residue field at $v$.

\subsection{Arakelov bundles and divisors}
\begin{defn}
 \begin{enumerate}
  \item A Hermitian module is a finitely generated projective $O_F$-module $M$ equipped with a Hermitian metric $\langle,\rangle_{\sigma}$ on $M \otimes_{\sigma} \bbC$ for each embedding $\sigma \colon F \hlra \bbC$.
	A Hermitian module $M$ is called of real type if it satisfies $\overline{\langle x \otimes_{\sigma} 1, y \otimes_{\sigma} 1 \rangle_{\sigma}} = \langle x \otimes_{\overline{\sigma}} 1, y \otimes_{\overline{\sigma}} 1 \rangle_{\overline{\sigma}}$ for each $x,y \in M$ and $\sigma$.
  \item An Arakelov vector bundle $V$ is a vector space over $F$,
	equipped with a $O_F$-lattice $\Gamma(V) \subset V$ which is a Hermitian module of real type.
  \item An Arakelov vector bundle of rank $1$ is called an Arakelov line bundle.
	The Arakelov Picard group $\Pic(F)$ is defined as the group of isometry classes of Arakelov line bundles.
  \item Let $V$ be an Arakelov vector bundle.
	Choose an embedding $\sigma_v \colon F_v \hlra \bbC$ for each place $v|\infty$.
	Then the Hermitian metric defines a $F_v$-norm $\|\cdot\|_v$ on $V \otimes_F F_v$ by 
	$\|x\|_v^2 = \langle x \otimes_{\sigma_v} 1,x \otimes_{\sigma_v} 1\rangle_{\sigma_v}$.
	This does not depend on the choice of $\sigma_v$ because $\Gamma(V)$ is of real type.
 \end{enumerate}
\end{defn}

Various constructions on finite dimensional vector spaces, such as direct sums, duals, tensor products and determinants, are also defined for Arakelov bundles.
We call an element of the underlying vector space $V$ a rational section.
\begin{defn}
 \begin{enumerate}
  \item An Arakelov divisor is a formal finite sum $\sum_{v|\infty}x_v[v] + \sum_{v\colon\fin}x_v[v]$, where $x_v \in \bbR$ for $v|\infty$ and $x_v \in \bbZ$ for finite $v$.
	We denote the group of Arakelov divisors by $\Div(F)$.
  \item For a rational function $f \in F^{\times}$, we define the principal divisor associated to $f$ as
	\[
	 (f) = \sum_{v|\infty}\bigl(-\log|f|^{n_v}_v\bigr)[v] + \sum_{v:\fin}\ord_v(f)[v].
	\]
	The Arakelov class group $\Cl(F)$ is the quotient of $\Div(F)$ by the group of principal divisors.
  \item The degree of an Arakelov divisor $D$ is defined as
	\[
	\deg(D) = \sum_{v|\infty}x_v + \sum_{v:\fin}\log N(v) x_v.
	\]
	The norm of $D$ is defined as $N(D) = \exp\bigl(\deg(D)\bigr)$.
 \end{enumerate}
\end{defn}
Note that the degree and the norm are well-defined for a class in $\Cl(F)$ by the product formula.

\begin{defn}
\label{defn:chern class of line bundle}
 \begin{enumerate}
  \item For an Arakelov line bundle $L$ and a rational section $x$ of $L$, we define the divisor of zeros and poles of $x$ as
	\[
	\div(x) = \sum_{v|\infty}(-\log \|x\|^{n_v}_v)[v] + \sum_{v:\fin}\ord_v(x)[v].
	\]
	Then the class $[\div(x)] \in \Cl(F)$ does not depend on $x$, which we denote as $c_1(L)$.
  \item For an Arakelov divisor $D$, we define an Arakelov line bundle $O_F(D)$ as a line bundle such that the vector space of rational sections is $F$ and that $\div(1) = D$.
  \item We define $\deg L = \deg c_1(L)$.
	We also define the degree of an Arakelov vector bundle $V$ as
	$\deg(V) = \deg \bigl(\det(V)\bigr)$.
	We define the norm of $V$ as $N(V) = \exp\bigl(\deg(V)\bigr)$.
 \end{enumerate}
\end{defn}
It is easily checked that the map $L \lmt c_1(L)$ is a bijection from the Picard group $\Pic(F)$ to the class group $\Cl(F)$.

\subsection{The Riemann-Roch and a vanishing result}\label{subs:Riemann-Roch}
The notion of Arakelov vector bundles is an arithmetic analogue of vector bundles on algebraic curves over finite fields.
The underlying $F$-vector space $V$ is considered as the space of rational sections, and the module $\Gamma(V) \subset V$ is considered as the set of rational sections which are regular at finite places.

One usual definition of ``global sections'' is as follows.
For an infinite place $v$, a rational section $x \in V$ is defined to be ``regular at $v$'' if and only if $\|x\|_v \leq 1$.
So one defines the set of global sections $H^0(V)$ as $\bigl\{x \in \Gamma(V)\bigm| \|x\|_v \leq 1 \text{ for all }v|\infty\bigr\}$,
and the ``dimension'' $h^0(V)$ of $H^0(V)$ as $\log \# H^0(V)$.

Another and beautiful idea of a definition of $h^0(V)$ is given in \cite{vanderGeerSchoof}.
For a place $v|\infty$ and a section $x \in \Gamma(V)$,
we consider the quantity $\exp(-n_v\pi\|x\|_v^2)$ as ``the probability of $x$ to be regular at $v$''.
Then the probability of $x$ to be a global section is equal to 
$\prod_{v|\infty}\exp(-n_v\pi\|x\|_v^2)$.
Therefore we make the following definitions.

\begin{defn}
\label{defn:beautiful definitions}
 Let $V$ be an Arakelov bundle.
 \begin{enumerate}
  \item For an element $x \in \Gamma(V)$, we define a positive real number $e_V(x)$ by
	\[
	 e_V(x) = \prod_{v|\infty}\exp(-n_v\pi\|x\|_v^2).
	\]
  \item We define ``the number of global sections'' $\#H^0(V)$ by
	\[
	\#H^0(V) = \sum_{x \in \Gamma(V)}e_V(x)
	\]
	and ``the dimension of $H^0(V)$'' by $h^0(V) = \log \#H^0(V)$.
	We also use $\varphi(V) = \#H^0(V) - 1$, which is considered as ``the number of nonzero global sections''.
 \end{enumerate}
\end{defn}

Now we state the Riemann-Roch theorem and a vanishing result of Arakelov vector bundles.
The canonical bundle $\omega$ is defined to be an Arakelov bundle such that $\Gamma(\omega)$ is the inverse of the different ideal of $F$ and $\|1\|_v = 1$ for all infinite places $v$.
Then $\deg \omega = \log |\Delta|$.

\begin{prop}\label{prop:R-R and vanishing}
 Let $V$ be an Arakelov vector bundle of rank $r$. 
 \begin{enumerate}
  \item (Riemann-Roch) $h^0(V)-h^0(V^{\vee}\otimes\omega)=\deg V - \frac{r}{2}\log|\Delta|$.
  \item (Vanishing)
	For any $C \in \bbR$, 
	there are constants $C_1,C_2 > 0$ such that
	\[
	\varphi(V\otimes L) \leq C_1 \exp \Bigl(-C_2 \exp \Bigl(-\frac{2}{[F:\bbQ]} \deg L\Bigr) \Bigr)
	\]
	for all $L \in Pic(F)$ with $\deg L \leq C$.
	In other words, the function $L \lmt \varphi(V \otimes L)$ on $\Pic(F)$ tends to zero doubly exponentially fast and uniformly as $\deg L \to -\infty$.
  \item There is a constant $C > 0$ such that $\varphi(L) \leq CN(L)$ for any $L \in \Pic(F)$.
 \end{enumerate}
\end{prop}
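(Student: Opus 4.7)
The three parts are proved in sequence. For (i), the plan is to apply Poisson summation to the Gaussian $e_V \colon V \otimes_\bbQ \bbR \to \bbR$ on the lattice $\Gamma(V)$. The Fourier transform of $e_V$, taken with respect to the trace pairing that identifies the real dual of $V \otimes_\bbQ \bbR$ with $(V^\vee \otimes \omega) \otimes_\bbQ \bbR$, is by Gaussian self-duality the corresponding Gaussian $e_{V^\vee \otimes \omega}$. The covolume of $\Gamma(V)$ computes to $N(V)^{-1}|\Delta|^{r/2}$, with the factor $|\Delta|^{r/2}$ encoding the different. Poisson summation therefore yields
\[
 \#H^0(V) = N(V)|\Delta|^{-r/2}\#H^0(V^\vee \otimes \omega),
\]
and (i) follows by taking logarithms.

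The crux of (ii) is a uniform lower bound on the Hermitian length of nonzero elements of $\Gamma(V\otimes L)$. For nonzero $x \in \Gamma(V\otimes L)$, the rank-one sub-Arakelov-bundle $M\subset V\otimes L$ with $\Gamma(M) = Fx\cap\Gamma(V\otimes L)$ satisfies $\deg M \leq \mu_{\max}(V)+\deg L$, where $\mu_{\max}(V)$ denotes the maximal degree of a line sub-Arakelov-bundle of $V$ and depends only on $V$. Applying the product formula to $x$ as a rational section of $M$ gives $\prod_{v|\infty}\|x\|_v^{n_v}\geq N(M)^{-1}$, and the weighted AM-GM inequality then yields
\[
 \sum_{v|\infty} n_v\|x\|_v^2 \;\geq\; [F:\bbQ]\,N(M)^{-2/[F:\bbQ]} \;\geq\; c_1\exp\bigl(-2\deg L/[F:\bbQ]\bigr),
\]
with $c_1>0$ independent of $L$. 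Writing $\lambda_1^2$ for the infimum of the squared Hermitian norm on $\Gamma(V\otimes L)\setminus\{0\}$, a standard ball-packing argument in the $n = r[F:\bbQ]$-dimensional ambient real vector space bounds the number of lattice vectors of squared norm $\leq T$ by $O\bigl((1+\sqrt T/\lambda_1)^n\bigr)$; summing the Gaussian weight against this count over dyadic shells in $T$ yields $\varphi(V\otimes L)\leq C_1\exp(-C_2\lambda_1^2)$, which combined with the lower bound on $\lambda_1^2$ gives the stated doubly exponential vanishing.

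Part (iii) combines (i) and (ii). For $\deg L$ sufficiently positive, (i) gives $1+\varphi(L) = N(L)|\Delta|^{-1/2}\bigl(1+\varphi(L^\vee\otimes\omega)\bigr)$, and (ii) applied to $L^\vee\otimes\omega$, whose degree then tends to $-\infty$, shows $\varphi(L^\vee\otimes\omega)\to 0$, so $\varphi(L)\leq C\,N(L)$ in this range. For $\deg L$ in any bounded interval, the corresponding Arakelov classes lie in a compact subset of $\Pic(F)$ on which $\varphi(L)/N(L)$ is continuous and hence bounded. For $\deg L\to-\infty$, $\varphi(L)$ decays doubly exponentially by (ii) while $N(L)=\exp(\deg L)$ only exponentially, so $\varphi(L)/N(L)\to 0$; taking $C$ large enough to dominate all three regimes concludes.

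The main technical obstacle is the uniform lower bound on $\lambda_1^2$ in (ii): it requires combining the global product formula, the weighted AM-GM inequality for the archimedean contributions, and the Harder--Narasimhan-style bound $\deg M\leq\mu_{\max}(V)+\deg L$ for line sub-Arakelov-bundles of $V\otimes L$, with the constants organized so as to depend only on the fixed $V$ and not on $L$.
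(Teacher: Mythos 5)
Your proposal is correct, and parts (i) and (iii) follow essentially the paper's route: (i) is the Poisson summation argument (the paper invokes it, citing van der Geer--Schoof for the rank-one computation and noting the general case is the same), and (iii) is obtained exactly as you do, from (i), (ii) and the compactness of $\Pic^0(F)$. Where you genuinely diverge is (ii). The paper first uses compactness of $\Pic^0(F)$ and continuity of $L \lmt \varphi(V\otimes L)$ to reduce to a single value of $C$, then embeds $V$ into a direct sum of Arakelov line bundles so that the estimate follows from the rank-one vanishing statement already proved in \cite{vanderGeerSchoof}; your argument is instead direct and self-contained: a lower bound on the first minimum of $\Gamma(V\otimes L)$ via the product formula, weighted AM--GM, and the bound $\deg M \leq \mu_{\max}(V)+\deg L$ for saturated line subbundles, followed by a packing/dyadic-shell estimate for the theta-type sum. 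Your route buys independence from the rank-one result and makes the constants' dependence on $V$ explicit; the paper's route is shorter because it outsources the analytic estimate entirely. Two details you should still record to make your (ii) airtight: the finiteness of $\mu_{\max}(V)$ (a short Minkowski/discreteness argument, or a citation to Stuhler--Grayson-type reduction theory), and the fact that the dyadic-shell summation produces constants $C_1,C_2$ only once $\lambda_1$ is bounded below by a positive constant --- which is exactly what your first-minimum bound gives on the region $\deg L \leq C$, so the constants legitimately depend on $C$ as the statement permits, but this should be said.
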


\begin{proof}
 As explained in \cite{vanderGeerSchoof} (where they prove (i) when $r=1$), the proof of (i) is an easy applicatoin of the Poisson summation formula.
 
 Since $\Pic^0(F)$ is compact (see \S\ref{subs:Topologies and measures}) and $L \lmt \varphi(V\otimes L)$ is continuous, it suffices to show the statement (ii) for one choice of $C$.
 Any Arakelov vector bundle $V$ is embedded into a direct sum $\oplus_{1 \leq i \leq n}L_i$ of Arakelov line bundles $L_i$.
 Then the proof of (ii) is reduced to the case $r=1$ and $C = \frac{1}{2}\log|\Delta|$, which is proved in \cite{vanderGeerSchoof}.

 The statement (iii) follows from (i), (ii) and the compactness of $\Pic^0(F)$.
\end{proof}

\subsection{Topologies and measures}\label{subs:Topologies and measures}
We endow the group $\Div(F) = \prod_{v|\infty}\bbR \times \bigoplus_{v:\fin}\bbZ$ with a natural topology and a Haar measure, i.e., the product of Euclidean and discrete topologies, and the product of Lebesgue and counting measures.
Then the group $\Pic(F) \cong \Cl(F) = \Div(F) / (F^{\times}/\mu_F)$
is endowed with the quotient topology and the quotient measure.
Let $\Pic^0(F) \subset \Pic(F)$ be the kernel of the degree map:
\[
 0 \lra \Pic^0(F) \lra \Pic(F) \stackrel{\deg}{\lra} \bbR \lra 0.
\]
The group $\Pic^0(F)$ fits into the exact sequence
\[
 0 \lra H/\phi(O_F^{\times}) \lra \Pic^0(F) \lra \Pic(O_F) \lra 0,
\]
where $\Pic(O_F)$ is the group of isomorphism classes of finitely generated projective $O_F$-modules of rank $1$, $H =\bigl \{ (x_v) \in \prod_{v|\infty}\bbR \bigm | \sum_{v} x_v = 0 \bigr \}$, and $\phi(f) = \bigl( \log|f|_v^{n_v} \bigr)_v$.
Then Dirichlet's unit theorem (i.e., the compactness of $H/\phi(O_F^{\times})$) and the finiteness of ideal class group (i.e., the finiteness of $\Pic(O_F)$) implies that $\Pic^0(F)$ is compact.
The volume of $\deg^{-1}\bigl([0,1]\bigr) \subset \Pic(F)$ is equal to $\alpha = Rh$.
For a function $f \in L^1(\bbR)$ 
we have $\int_{\Pic(F)}f(\deg L)dL = \alpha\int_{\bbR}f(t)dt$.

The completed Dedekind zeta function of $F$ can be expressed as an integral over $\Div(F)$.
To explain this, we define the effectivity $e(D)$ (i.e., ``the probability of $D$ to be effective'') of an Arakelov divisor $D$.
\begin{defn}
 For an Arakelov divisor $D$, we define
 \[
  e(D) = \begin{cases}
	  0 & \text{if the finite part of }D\text{ is effective, and}\\
	  e_{O_F}(1) & \text{otherwise.}
	 \end{cases}
 \]
 Here, $e_{O_F}(1)$ denotes ``the probability of $1$ to be a global section of $O_F(1)$'' defined in Definition \ref{defn:beautiful definitions}.

 Explicitly, if the finite component of $D$ is effective and the infinite component is $\sum_{v|\infty}x_v[v]$, then the effectivity $e(D)$ is given by $\prod_{v|\infty}\exp(-n_v\pi\exp(-\frac{2}{n_v}x_v))$.
\end{defn}

Now we define a function $\xi(s)$ as an integral
\[
 \xi(s) = \int_{\Div(F)}N(D)^{-s}e(D)dD.
\]
By the decomposition $\Div(F) = \bigoplus_{v\colon\fin}\bbZ \times \prod_{v|\infty}\bbR$, we have
\begin{align*}
 \xi(s) &= \sum_{I \subset \calO_F} N(I)^{-s} \prod_{v|\infty}\int_{\bbR}e^{-xs}e^{-n_v\pi\exp(-\frac{2}{n_v}x)}dx \\
 &= 2^{-r_1}\Bigl(\pi^{-\frac{s}{2}}\Gamma\Bigl(\frac{s}{2}\Bigr)\Bigr)^{r_1}\bigl((2\pi)^{-s}\Gamma(s)\bigr)^{r_2}\zeta(s).
\end{align*}
(The computation in \cite{vanderGeerSchoof} is incorrect by a scalar multiple.)

We can also express $\xi(s)$ as an integral over $\Pic(F)$.
Note that the effectivity $e\bigl(D+(f)\bigr)$
is equal to $e_{O(D)}(f)$.
Therefore,
\[
 \xi(s) = \int_{\Pic(F)}N\bigl([D]\bigr)^{-s}\sum_{f \in F^{\times}/\mu_F}e\bigl(D+(f)\bigr)d[D] = w^{-1} \int_{\Pic(F)}N(L)^{-s}\varphi(L)dL.
\]

\section{Projective spaces}
\label{sec:Height zeta functions}
In this section, we prove the analytic continuation and functional equations of height zeta functions of projective spaces.
\subsection{Heights}
\label{ssec:Height}
Let $V$ be an Arakelov vector bundle of rank $r = n+1$,
and $\bbP(V)$ be the associated projective space of lines in $V$.
A $1$-dimensional subspace $L \subset V$
corresponds to a rank $1$ projective $O_F$-submodule $\Gamma(V) \cap L$ of $\Gamma(V)$ with a projective cokernel.
So we have an Arakelov subbundle of $V$ of rank $1$ for each $F$-rational point $P$ of $\bbP(V)$, which is denoted as $P^*\calO(-1)$.
The dual of $P^*\calO(-1)$ is denoted as $P^*\calO(1)$.
We define the height $H(P)$ of $P$ as the norm $N\bigl(P^*\calO(1)\bigr)$.

\subsection{A generalization of Wan's formula}
The height zeta function is defined as the series
\begin{align}\label{eq:def of height zeta}
 Z\bigl(\bbP(V),s\bigr) = \sum_{P \in \bbP(V)(F)}H(P)^{-s} \quad\text{for }s \in \bbC.
\end{align}
We are going to show that the series $Z\bigl(\bbP(V),s\bigr)$ converges absolutely for $\Re s > r$, and it is analytically continued to a meromorphic function on the whole complex plane.

Let $\xi(s)$ be the function defined in \S\ref{subs:Topologies and measures}.
Then for a point $P \in \bbP(V)(F)$, we have
\begin{align*}
 w\xi(s)H(P)^{-s} &= \int_{\Pic(F)}N\bigl(L\otimes P^*\calO(1)\bigr)^{-s} \varphi(L)dL \\
 &= \int_{\Pic(F)}N(L)^{-s} \varphi\bigl(L\otimes P^*\calO(-1)\bigr)dL.
\end{align*}
If we fix $L \in \Pic(F)$ and move $P$ in the set $\bbP(V)(F)$, 
then the line bundle $L \otimes P^*\calO(-1)$ runs through
all of line-subbundles of $L \otimes V$.
Note that, for a rational section $x \in L \otimes P^*\calO(-1)$,
the quantity $e_{L \otimes P^*\calO(-1)}(x)$ is equal to $e_{L \otimes V}(x)$.
Therefore, by the definition of the height zeta function (\ref{eq:def of height zeta}), we have
\begin{align}
 \begin{aligned}\label{eq:Mobius inversion}
 w \xi(s)Z\bigl(\bbP(V),s\bigr)
 &= \int_{\Pic(F)}N(L)^{-s}\sum_{P \in \bbP(V)(F)}\varphi\bigl(L \otimes P^*\calO(-1)\bigr)dL \\
 &= \int_{\Pic(F)}N(L)^{-s}\varphi(L \otimes V)dL. 
\end{aligned}
\end{align}

\begin{rem}
 Consider a new function $\xi^{(k)}(s)$ defined by
 \[
 \xi^{(k)}(s) = \int_{\Div(F)}\biggl(\frac{\varphi\bigl(O_F(D)\bigr)}{w}\biggr)^k N(D)^{-s}e(D)dD = w^{-(k+1)}\int_{\Pic(F)}\varphi(L)^{k+1}N(L)^{-s}dL
 \]
 for $k \geq 0$.
 This is an arithmetic analogue of the $k$-th zeta function in \cite{Wan}.
 
 Suppose that $V=F^{\oplus r}$ is a trivial bundle of rank $r$.
 In this case, since $\varphi(L^{\oplus r})+1 = \bigl(\varphi(L)+1\bigr)^r = \sum_{0 \leq l \leq r}\binom{r}{l}\varphi(L)^l$, the formula (\ref{eq:Mobius inversion}) implies
 \[
  Z\bigl(\bbP^n,s\bigr) = \frac{1}{\xi(s)}\sum_{0 \leq k \leq n}\binom{n+1}{k+1}w^{k}\xi^{(k)}(s).
 \]
 This is our version of Wan's formula in Theorem 3.1 of \cite{Wan}.
\end{rem}

\subsection{An Application of the Riemann-Roch}
\label{ssect:Application of Riemann-Roch}
Let $E_{+} = \{L \in \Pic(F) \mid N(L) \geq \sqrt{|\Delta|}\}$ and $E_{-} = \{L \in \Pic(F) \mid N(L) \leq \sqrt{|\Delta|}\}$.
Divide the right hand side of (\ref{eq:Mobius inversion}) into the sum of two integrals:
\[
 \int_{\Pic(F)}N(L)^{-s}\varphi(L \otimes V)dL = F_+(V,s) + F_-(V,s)
\]
where
\[
 F_{\pm}(V,s) = \int_{E_{\pm}}N(L)^{-s}\varphi(L\otimes V)dL.
\]
By (ii) of Proposition \ref{prop:R-R and vanishing}, the integral $F_{-}(V,s)$ converges to a holomorphic function on the whole complex plane.
As for the integral $F_{+}(V,s)$, by substituting $L^{\vee} \otimes \omega$ for $L$,
we have the following expression over $E_{-}$:
\[
 F_{+}(V,s) = \int_{E_{-}}|\Delta|^{-s}N(L)^s\varphi(L^{\vee} \otimes V \otimes \omega)dL.
\]
By the Riemann-Roch theorem ((i) of Proposition \ref{prop:R-R and vanishing}) applied to $L \otimes V^\vee$, we have
\begin{align*}
 \varphi(L^{\vee} \otimes V \otimes \omega) + 1 &= |\Delta|^{\frac{r}{2}}N(L^{\vee} \otimes V) \bigl(\varphi(L \otimes V^{\vee})+1\bigr) \\
 \iff \varphi(L^{\vee} \otimes V \otimes \omega)
 &= |\Delta|^{\frac{r}{2}}N(V)N(L)^{-r} \bigl(\varphi(L \otimes V^{\vee})+1\bigr) - 1.
\end{align*}
Theorefore,
\begin{align*}
 N(V)^{-\frac{1}{2}}|\Delta|^{\frac{s}{2}}F_{+}(V,s)
 = \int_{E_{-}}N(V)^{\frac{1}{2}}|\Delta|^{\frac{r-s}{2}}N(L)^{s-r}\varphi(L \otimes V^{\vee})dL \\
 + \int_{E_{-}}N(V)^{\frac{1}{2}}|\Delta|^{\frac{r-s}{2}}N(L)^{s-r}dL
 - \int_{E_{-}}N(V)^{-\frac{1}{2}}|\Delta|^{-\frac{s}{2}}N(L)^{s}dL.
\end{align*}
The first term in the right hand side converges to a holomorphic function on the whole complex plane by (ii) of Proposition \ref{prop:R-R and vanishing}.
Moreover, by the formula $\int_{\Pic(F)}f(\deg L)dL = \alpha \int_{\bbR}f(t)dt$ in \S\ref{subs:Topologies and measures}, the second and third terms are computed explicitly as follows:
\begin{align*}
 \int_{E_{-}}N(V)^{\frac{1}{2}}|\Delta|^{\frac{r-s}{2}}N(L)^{s-r}dL &=
 \frac{N(V)^{\frac{1}{2}}\alpha}{s-r} \quad (\Re(s) > r),\\
 \int_{E_{-}}N(V)^{-\frac{1}{2}}|\Delta|^{-\frac{s}{2}}N(L)^{s}dL &=
 \frac{N(V)^{-\frac{1}{2}}\alpha}{s} \quad (\Re(s) > 0).
\end{align*}
Summarizing, we obtained the following expression of the right hand side of (\ref{eq:Mobius inversion}) multiplied by $N(V)^{-\frac{1}{2}}|\Delta|^{\frac{s}{2}}$ which is valid for $s$ with $\Re(s) > r$:
 \begin{align}
  \begin{aligned}\label{eq:star}
   N(V)^{-\frac{1}{2}}|\Delta|^\frac{s}{2}\bigl(F_{-}(V,s) + F_{+}(V,s)\bigr)
 &= \int_{E_{-}}N(V)^{-\frac{1}{2}}|\Delta|^{\frac{s}{2}}N(L)^{-s}\varphi(L\otimes V)dL \\
 &+ \int_{E_{-}}N(V)^{\frac{1}{2}}|\Delta|^{\frac{r-s}{2}}N(L)^{s-r}\varphi(L \otimes V^{\vee})dL \\
 &+ \frac{N(V)^{\frac{1}{2}}\alpha}{s-r}
 -\frac{N(V)^{-\frac{1}{2}}\alpha}{s}.
  \end{aligned}
 \end{align}
Note that the right hand side is invariant under the replacement $(V,s) \leftrightarrow (V^{\vee},r-s)$.
Now we have proved the following theorem.
\begin{thm}
 \label{thm:main theorem}
 Let $V$ be an Arakelov vector bundle of rank $r = n + 1$.
 Then the height zeta function $Z\bigl(\bbP(V),s\bigr)$ converges to a holomorphic function on the domain $\{s \in \bbC \mid \Re(s) > r\}$, and it is analytically continued to a meromorphic function on the whole complex plane.
 In the domain $\{s \in \bbC \mid \Re(s) > 1\}$, it has an unique simple pole at $s = r$ with residue
 \[
  \Res_{s=r}Z\bigl(\bbP(V),s\bigr)=\frac{\alpha N(V)}{w|\Delta|^{\frac{r}{2}}\xi(r)}.
 \]

 Moreover, these functions satisfy the following functional equations:
 \[
 N(V)^{-\frac{1}{2}}|\Delta|^{\frac{s}{2}}\xi(s)Z\bigl(\bbP(V),s\bigr)
 = N(V^{\vee})^{-\frac{1}{2}}|\Delta|^{\frac{r-s}{2}}\xi(r-s)Z\bigl(\bbP(V^{\vee}),r-s\bigr).
 \]
 \end{thm}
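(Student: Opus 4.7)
The plan is to read all three statements off the master identity (\ref{eq:star}) of \S\ref{ssect:Application of Riemann-Roch} combined with the formula (\ref{eq:Mobius inversion}). The work of the preceding subsection has already done almost everything; what remains is to justify absolute convergence, to handle the division by $\xi(s)$, and to observe a symmetry.

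First I would establish absolute convergence of $Z\bigl(\bbP(V),s\bigr)$ in the half-plane $\Re(s) > r$. Since all summands in (\ref{eq:def of height zeta}) and the integrand in (\ref{eq:Mobius inversion}) are non-negative for real $s$, Tonelli's theorem promotes the formal derivation of (\ref{eq:Mobius inversion}) to an identity in $[0,\infty]$. Embedding $V$ into a direct sum $\bigoplus_{i=1}^{r}L_i$ of Arakelov line bundles gives the submultiplicative estimate $\varphi(L \otimes V)+1 \leq \prod_i \bigl(\varphi(L \otimes L_i)+1\bigr)$, which together with (iii) of Proposition \ref{prop:R-R and vanishing} yields $\varphi(L \otimes V) \leq C \cdot N(L)^r$ for some $C = C(V)$. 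Hence the integral over $E_{+}$ converges for real $s > r$, and the integral over $E_{-}$ converges for all $s$ by (ii) of Proposition \ref{prop:R-R and vanishing}. This forces convergence of the Dirichlet series $Z\bigl(\bbP(V),s\bigr)$ for real $s > r$, and absolute convergence on $\Re(s) > r$ follows by a comparison with the real case.

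Next I would extract the meromorphic continuation and locate the poles from (\ref{eq:star}). On its right hand side the first two integrals are entire by (ii) of Proposition \ref{prop:R-R and vanishing} (applied to $V$ and to $V^{\vee}$), and the remaining two terms are explicit rational functions in $s$. Hence $F_{-}(V,s)+F_{+}(V,s)$ extends meromorphically to all of $\bbC$, and dividing by the meromorphic factor $w\xi(s)N(V)^{-\frac{1}{2}}|\Delta|^{\frac{s}{2}}$ produces the meromorphic continuation of $Z\bigl(\bbP(V),s\bigr)$. To pin down the poles in $\Re(s) > 1$, I use the standard fact that $\xi(s)$ is holomorphic and non-vanishing in this half-plane (Euler product for $\zeta_F$ plus non-vanishing of the gamma factors). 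The two entire integrals contribute no poles there; of the two rational terms, only $\alpha N(V)^{\frac{1}{2}}/(s-r)$ has a pole in $\Re(s) > 1$, giving a simple pole of $Z\bigl(\bbP(V),s\bigr)$ at $s = r$, and a direct evaluation of the remaining factors at $s = r$ yields the stated residue.

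Finally, the functional equation is a manifest symmetry of the right hand side of (\ref{eq:star}) under $(V,s)\leftrightarrow(V^{\vee},r-s)$: the two integrals swap into one another, and so does the pair of rational terms (using $N(V^{\vee}) = N(V)^{-1}$). Since $N(V)^{-\frac{1}{2}}|\Delta|^{\frac{s}{2}}\xi(s)Z\bigl(\bbP(V),s\bigr)$ equals $w^{-1}$ times that right hand side by (\ref{eq:Mobius inversion}) and (\ref{eq:star}), it is invariant under the substitution, which is the claimed equation. The only delicate point in this plan is the interface between the positive-measure identity (\ref{eq:Mobius inversion}) and its extension as an identity of holomorphic functions on $\Re(s) > r$, together with the non-vanishing of $\xi(s)$ in $\Re(s) > 1$; both are standard.
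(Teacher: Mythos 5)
Your proposal is correct and follows essentially the same route as the paper: the theorem is read off from the identity (\ref{eq:Mobius inversion}) together with the expression (\ref{eq:star}), whose symmetry under $(V,s)\leftrightarrow(V^{\vee},r-s)$ gives the functional equation and whose explicit rational terms give the pole and residue at $s=r$. Your added details (the bound $\varphi(L\otimes V)\leq CN(L)^{r}$ on $E_{+}$ for convergence in $\Re(s)>r$, and the non-vanishing of $\xi(s)$ for $\Re(s)>1$) are correct refinements of points the paper leaves implicit.
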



\section{Motivic case}
\label{sect:Motivic case}
In this section, we prove the analytic continuation and functional equations of motivic height zeta functions of projective bundles.

\subsection{Zeta functions}
Fix a field $k$ and let $K$ be the Grothendieck ring of quasi-projective varieties over $k$.
So $K$ is a ring generated by isomorphism classes of varieties over $k$ with multiplication defined by products of varieties, 
modulo relations of the form $[X] = [Y] + [X \setminus Y]$ for each closed subvariety $Y$ of $X$.
Consider an ideal $I \subset K$ generated by elements of the form $[X]-[Y]$ for each radicial surjective morphism $f \colon X \lra Y$.
Let $\widetilde{K} = K/I$.
We denote the image of $\bbA^1$ in $\widetilde{K}$ by $\bbL$.
In the last, let $\calM = \widetilde{K}[\bbL^{-1}]$ be the localization of $\widetilde{K}$ by $\bbL$.

Let $C$ be a smooth projective curve over $k$.
For simplicity we assume that $C$ has a $k$-rational point $p_0$.
Kapranov's motivic zeta function of $C$ is defined as a series in $K[[t]]$ whose $n$-the coefficient is the symmetric product $C_n$ of $C$:
\[
 \zeta(t) = \sum_{n \geq 0}[C_n]t^n.
\]
Note that $C_0 = \Spec k$, so the constant term of $\zeta(t)$ is $1$.
For later convenience, we define $C_n = \emptyset$ for negative $n$.
Kapranov has shown that $\zeta(t)$ is a ratioanl function in $\calM(t)$ of the form
\[
 \zeta(t) = \frac{f(t)}{(1-t)(1-\bbL t)},
\]
where $f(t)$ is a polynomial in $\calM[t]$ of degree $\leq 2g$.
Moreover, the motivic zeta function of $C$ satisfies the following functional equation
\[
 \zeta(t) = \bbL^{g-1}t^{2g-2}\zeta\bigl(\bbL^{-1}t^{-1}\bigr)
\]
in $\calM(t)$.

Let $V$ be a vector bundle over $C$ of rank $r$,
and $\pi \colon \bbP(V) \lra C$ be the associated projective bundle of line-subbundles of $V$.
Let $\calO(-1)$ denote the universal subbundle of $\pi^*V$, and $\calO(1)$ the dual.
For each $d \in \bbZ$, let $\Sect_d\bigl(\bbP(V)\bigr)$ be the scheme of sections of $\bbP(V) \lra C$ of degree $d$.
By definition, it is a scheme which represents the functor
\[
\Sch_k \lra \Sets; T \lmt \bigl\{\text{sections } s \text{ of }\pi \times \id_T \colon \bbP(V) \times T \lra C \times T \bigm | \forall t \in T, \deg\bigl(s_t^*\calO(1)\bigr) = d\bigr\}.
\]
The above functor is represented by an open subscheme of the Hilbert scheme of closed subschemes of $\bbP(V)$ of dimension $1$ and degree $d$ with respect to $\calO(1)$.
In particular, $\Sect_d\bigl(\bbP(V)\bigr)$ is a quasi-projective variety.

\begin{lem}
\label{lem:Sect_d empty for small d}
 For enough small $d$, $\Sect_d\bigl(\bbP(V)\bigr)$ is an empty set.
\end{lem}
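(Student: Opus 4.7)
The plan is to translate the nonexistence of sections into a boundedness statement for line subbundles of $V$, then establish that bound via a filtration of $V$ whose graded pieces are line bundles.

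The first step is to identify a $T$-valued section $s$ of $\pi \colon \bbP(V) \lra C$ with a line subbundle $L_s := s^*\calO(-1)$ of the pullback of $V$ to $C \times T$, using the universal subbundle $\calO(-1) \hlra \pi^*V$. Since $s^*\calO(1) = L_s^{\vee}$, the degree condition becomes $\deg L_s = -d$ on every geometric fibre. Therefore $\Sect_d\bigl(\bbP(V)\bigr)$ is nonempty (as a scheme, equivalently as a functor) exactly when, over some extension $k'/k$, the bundle $V_{k'}$ admits a line subbundle of degree $-d$.

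The key step is the estimate that the degrees of line subbundles of $V$ are bounded above (and this bound is stable under base change to any field extension, since it will be produced geometrically). One builds a filtration
\[
0 = V_0 \subsetneq V_1 \subsetneq \cdots \subsetneq V_r = V
\]
by subbundles with line-bundle successive quotients $W_i := V_i / V_{i-1}$, which exists on any smooth curve by inductively choosing a nonzero line subsheaf of the current quotient and saturating it. For any line subbundle $L \subset V$, let $i$ be the least index with $L \subset V_i$; then the composition $L \hlra V_i \tlra W_i$ is a nonzero morphism of line bundles on $C$, hence injective, so $\deg L \leq \deg W_i \leq M$ where $M := \max_j \deg W_j$.

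Combining these, whenever $d < -M$ there is no line subbundle of $V_{k'}$ of degree $-d$ for any $k'/k$ (the same filtration and bound work after base change), and hence $\Sect_d\bigl(\bbP(V)\bigr)$ is empty. The main subtle point I foresee is ensuring emptiness at the scheme-theoretic level rather than just on $k$-points, which is why the filtration argument should be phrased so that the bound $M$ is insensitive to field extension; otherwise the argument is a standard slope bound produced by the line-bundle filtration.
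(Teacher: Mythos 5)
Your proof is correct, but it takes a genuinely different route from the paper's. You identify a section over a field $k'$ with the line subbundle $s^*\calO(-1)\subset V_{k'}$ of degree $-d$, and then bound degrees of line subbundles from above by an explicit constant $M$, produced from a filtration of $V$ by subbundles with line-bundle quotients (a standard slope-type bound); since the filtration and hence $M$ are defined over $k$ and base change to any $k'$, and since emptiness of a scheme is detected on field-valued points, this gives $\Sect_d\bigl(\bbP(V)\bigr)=\emptyset$ for $d<-M$. The paper instead twists: it chooses an ample $L$ on $C$ and $m$ with $V^{\vee}\otimes L^{\otimes m}$ ample, observes that $s^*\calO_{\bbP(V)}(1)\otimes L^{\otimes m}=s^*\bigl(\calO_{\bbP(V\otimes L^{\otimes(-m)})}(1)\bigr)$ is then ample on the curve, hence of positive degree, which bounds $d$ from below uniformly in the field extension (the same $m$ works after base change). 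Your argument is more elementary and self-contained, avoiding the notion of ampleness for vector bundles and the comparison of twisting sheaves under $\bbP(V)\cong\bbP(V\otimes L^{\otimes(-m)})$, at the cost of constructing the filtration; the paper's argument is shorter and its uniformity over extensions is immediate. One small point to make explicit in your version: you need $C_{k'}$ integral so that a nonzero map of line bundles $L\to W_i$ is injective and forces $\deg L\le\deg W_i$; this is guaranteed here because $C$ is smooth, proper, connected and has a $k$-rational point, hence is geometrically integral.
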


\begin{proof}
 Fix an ample line bundle $L$ on $C$.
 There exists an integer $m$ such that $V^{\vee} \otimes L^{\otimes m}$ is ample.
 Then for any field $k'$ containing $k$ and a section $s \colon C_{k'} \lra \bbP(V_{k'})$ of degree $d$,
 the line bundle $s^*\bigl(\calO_{\bbP(V\otimes L^{\otimes(-m)})}(1)\bigr) = s^*\calO_{\bbP(V)}(1) \otimes L^{\otimes m}$ is ample,
 and therefore its degree $d + m$ is positive.
 This implies that $\Sect_d\bigl(\bbP(V)\bigr) = \emptyset$ if $d \leq - m$.
\end{proof}

Now we define the motivic height zeta function of $\bbP(V)$ as the power series
\[
 Z\bigl(\bbP(V),t\bigr) = \sum_{d \in \bbZ}\bigl[\Sect_d\bigl(\bbP(V)\bigr)\bigr]t^d,
\]
which belongs to $K((t))$ by Lemma \ref{lem:Sect_d empty for small d}.

\subsection{The variety $X_n(V)$}
Consider the product of zeta function of $C$ and height zeta function:
\[
 \zeta(t)Z\bigl(\bbP(V),t\bigr) = \sum_{n \in \bbZ} \sum_{d \in \bbZ} \bigl[\Sect_d\bigl(\bbP(V)\bigr)\bigr][C_{n-d}]t^n.
\]
In this subsection, we define a variety $X_n(V)$ and prove the equality
$\sum_{d \in \bbZ} \bigl[\Sect_d\bigl(\bbP(V)\bigr)\bigr][C_{n-d}] = \bigl[X_n(V)\bigr]$ in $\widetilde{K}$.

For a coherent sheaf $\calF$, let $\bbP^{\vee}(\calF)$ denote the $\Proj$-construction applied to the symmetric algebra of $\calF$.
For a surjective morphism $\calF \lra \calG$ of coherent sheaves there is an associated closed embedding $\bbP^{\vee}(\calG) \lra \bbP^{\vee}(\calF)$.
If $\calF$ is the sheaf of sections of a vector bundle $V$, then
$\bbP^{\vee}(\calF^{\vee})$ is our projective bundle $\bbP(V)$ of line-subbundles of $V$.

Fix a point $p_0 \in C(k)$.
Let $J_n = \Pic^n(C)$ be the Picard scheme of line bundles of degree $n$.
We write simply $J$ for $J_0$.
Let $\calP_n$ be the Poincar\'e sheaf on $C \times J_n$ normalized such that $\calP_n|_{\{p_0\} \times J_n} \cong \calO_{J_n}$ and $\calP_n|_{C \times \{\calO(np_0)\}} \cong \calO_C(np_0)$.
Schwarzenberger \cite{Schwarzenberger} showed that the natural morphism $C_n \lra J_n$ is isomorphic to $\bbP^{\vee}\bigl(R^1p_{2*}(\calP_n^{\vee} \otimes p_1^*\omega_C)\bigr) \lra J_n$ (where $p_i$ ($i = 1,2$) denote projection morphisms from $C \times J_n$ to $C$, or $J_n$).
We define $X_n(V) = \bbP^{\vee}\bigl(R^1p_{2*}(\calP_n^{\vee} \otimes p_1^*V^{\vee} \otimes p_1^*\omega_C)\bigr)$ (here we simply write $V$ for the sheaf of sections of $V$).
The following lemma is a generalization of Proposition 7 and 8 in \cite{Schwarzenberger}.
\begin{lem}
\label{lem:coh and base chg}
 For any scheme $S$ and morphism $h\colon S \lra J_n$,
 there is an isomorphism $X_n(V) \times_{J_n} S \cong \bbP^{\vee}\bigl(R^1p_{2*}((\id_C \times h)^*(\calP_n^{\vee} \otimes p_1^{*}V^{\vee} \otimes p_1^*\omega_C))\bigr)$.
 In particular, if $S = \Spec k'$ for a field $k'$
 and $h$ corresponds to a sheaf $\calL \in \Pic^n(C_{k'})$,
 then the fiber of $X_n(V) \lra J_n$ at $\calL \in J_n(k')$ is isomorphic to
 $\bbP^{\vee}\bigl(H^1(C_{k'},\calL^{\vee} \otimes V^{\vee}_{k'} \otimes \omega_{C_{k'}})\bigr) \cong \bbP\bigl(H^0(C_{k'},\calL \otimes V_{k'})\bigr)$.
\end{lem}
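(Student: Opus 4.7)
The plan is to deduce the first (relative) isomorphism from cohomology-and-base-change applied to the proper morphism $p_2$, and then to extract the second (fibrewise) statement by invoking Serre duality on the smooth projective curve $C_{k'}$.

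For the first step, I would set $\calG_n = \calP_n^{\vee}\otimes p_1^*V^{\vee}\otimes p_1^*\omega_C$ on $C\times J_n$. This sheaf is locally free, hence flat over $J_n$. Since the fibres of $p_2\colon C\times J_n\to J_n$ are one-dimensional, $R^ip_{2*}\calG_n = 0$ for all $i\geq 2$. Cohomology-and-base-change in the top non-vanishing degree then applies, showing that the formation of $R^1p_{2*}\calG_n$ commutes with \emph{arbitrary} base change $h\colon S\to J_n$:
\[
h^*R^1p_{2*}\calG_n\;\cong\;R^1p_{2*}\bigl((\id_C\times h)^*\calG_n\bigr).
\]
Because the construction $\calF\mapsto\Proj(\Sym\calF)$ commutes with base change by its universal property, applying $\bbP^{\vee}$ to both sides yields the first asserted isomorphism $X_n(V)\times_{J_n}S\cong\bbP^{\vee}\bigl(R^1p_{2*}((\id_C\times h)^*\calG_n)\bigr)$.

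For the fibre statement, I would specialise to $S=\Spec k'$ with $h$ classifying $\calL\in\Pic^n(C_{k'})$. The universal property of the Poincar\'e sheaf, together with its normalization along $\{p_0\}\times J_n$, identifies $(\id_C\times h)^*\calP_n\cong\calL$, so that $(\id_C\times h)^*\calG_n\cong\calL^{\vee}\otimes V_{k'}^{\vee}\otimes\omega_{C_{k'}}$. The fibre in question is therefore $\bbP^{\vee}\bigl(H^1(C_{k'},\calL^{\vee}\otimes V_{k'}^{\vee}\otimes\omega_{C_{k'}})\bigr)$. Serre duality on $C_{k'}$ now gives a natural isomorphism $H^1(C_{k'},\calL^{\vee}\otimes V_{k'}^{\vee}\otimes\omega_{C_{k'}})\cong H^0(C_{k'},\calL\otimes V_{k'})^{\vee}$, and the paper's conventions make $\bbP^{\vee}(W^{\vee})$ the space of lines in $W$, i.e.\ $\bbP(W)$. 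Chaining these identifications yields $\bbP(H^0(C_{k'},\calL\otimes V_{k'}))$, as required.

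The one point that needs genuine care is the cohomology-and-base-change step. What makes it work here is precisely the vanishing $R^2p_{2*}=0$ on the category of $p_2$-flat coherent sheaves, which is forced by the one-dimensionality of the fibres; this is what upgrades the base-change isomorphism from flat $h$ to arbitrary $h$. Once this is in hand the remaining ingredients—identifying the pullback of $\calP_n$ via its normalization, invoking Serre duality on $C_{k'}$, and translating between $\bbP^{\vee}$ and $\bbP$—are essentially bookkeeping.
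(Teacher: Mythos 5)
Your proof is correct and follows essentially the same route as the paper: base change for the top higher direct image (here $R^1p_{2*}$, since the fibres of $p_2$ are one-dimensional), compatibility of $\bbP^{\vee}$ with pullback, and, for the fibre statement, the normalization of the Poincar\'e sheaf together with Serre duality. The only cosmetic difference is that the paper invokes top-degree base change for arbitrary coherent sheaves (a fact it reuses later), whereas you restrict to the locally free sheaf at hand via flatness; both suffice for this lemma.
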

\begin{proof}
 Schwarzenberger proves this lemma when $V = \calO_C$, and his proof also applies to our case.
 Here we give a shorter proof.
 Since top higher direct images always commute with base change,
 there is a canonical isomorphism
 \[
  h^*R^1p_{2*}\calG \conglra R^1p_{2*}\bigl((\id_C \times h)^*\calG\bigr)
 \]
 for any coherent sheaf $\calG$ on $C \times J_n$ and $n \in \bbZ$.
 The construction $\bbP^{\vee}$ is compatible with pull-back of coherent sheaves.
 Now the lemma is clear.
 \end{proof}
\label{prop:computation of prod of sect and C}
\begin{prop}
 \begin{enumerate}
  \item There is a natural morphism $\Sect_d\bigl(\bbP(V)\bigr) \times C_{n-d} \lra X_n(V)$.
  \item The morphism $m\colon\coprod_{d \in \bbZ}\Sect_d\bigl(\bbP(V)\bigr) \times C_{n-d} \lra X_n(V)$ induced by (i) is radicial and surjective.	
 \end{enumerate}
\end{prop}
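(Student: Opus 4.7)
The plan is to construct the morphism of (i) functorially using the tautological data on $T := \Sect_d\bigl(\bbP(V)\bigr) \times C_{n-d}$, and then to deduce (ii) by exhibiting an inverse to $m$ on $K$-valued points for each field $K \supseteq k$.

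For (i), I would proceed as follows. The tautological section on $T$ produces a line subbundle $\calM := s^{*}\calO(-1) \hookrightarrow p_1^{*}V$ of relative degree $-d$ on $C \times T$, while the projection to $C_{n-d}$ produces a universal relative effective Cartier divisor $D \subset C \times T$ of relative degree $n-d$. Set $\calN := \calM^{\vee} \otimes \calO_{C \times T}(D)$, a line bundle of relative degree $n$. Dualizing $\calM \hookrightarrow p_1^{*}V$ and composing with $\calO \hookrightarrow \calO(D)$ yields a global section
\[
 \sigma \colon \calO_{C \times T} \hookrightarrow \calM^{\vee} \otimes p_1^{*}V \hookrightarrow \calN \otimes p_1^{*}V
\]
whose restriction to each geometric fiber of $p_2 \colon C \times T \to T$ is nonzero. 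Via relative Serre duality and the base change isomorphism of Lemma \ref{lem:coh and base chg}, this $\sigma$ translates into a surjection from $h^{*}\calF$ onto a line bundle on $T$, where $\calF = R^{1}p_{2*}(\calP_n^{\vee} \otimes p_1^{*}V^{\vee} \otimes p_1^{*}\omega_C)$ and $h \colon T \to J_n$ is the classifying morphism of $\calN$. This data defines the desired morphism $T \to \bbP^{\vee}(\calF) = X_n(V)$.

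For (ii), I would verify that $m$ is bijective on $K$-valued points for every field extension $K/k$; this implies both surjectivity and universal injectivity, hence radicial plus surjective. By Lemma \ref{lem:coh and base chg} together with Serre duality, a $K$-point of $X_n(V)$ is a pair $(\calL, [\sigma])$ with $\calL \in J_n(K)$ and $\sigma \in H^0(C_K, \calL \otimes V_K) \setminus \{0\}$ taken up to scaling. From such a $\sigma$ one forms the injection $\calL^{\vee} \hookrightarrow V_K$; let $\calM \subset V_K$ be its saturation, and let $D$ be the effective divisor supporting the finite-length cokernel $\calM/\calL^{\vee}$. Setting $d = -\deg \calM$, the subbundle $\calM$ yields a section $s \in \Sect_d\bigl(\bbP(V)\bigr)(K)$, and one obtains a $K$-point $(s, D) \in \bigl(\Sect_d\bigl(\bbP(V)\bigr) \times C_{n-d}\bigr)(K)$. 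Rescaling $\sigma$ rescales only the inclusion $\calL^{\vee} \hookrightarrow \calM$, not $\calM$ or $D$ themselves, so $(s, D)$ is well-defined; and by the construction of (i) it is the unique preimage of $(\calL, [\sigma])$. Hence $m(K)$ is bijective.

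The main obstacle I expect lies in (i), specifically in bookkeeping the $p_2^{*}\Pic(T)$-twist that appears because $\calN$ agrees with $(\id_C \times h)^{*}\calP_n$ only up to tensoring by the pullback of a line bundle from $T$. Such a twist leaves the $\bbP^{\vee}$-construction unchanged, but it must be tracked through the application of Lemma \ref{lem:coh and base chg} so that the surjection produced on $T$ is identified with one into the correct projective bundle. Once (i) is set up cleanly, the saturation argument in (ii) is essentially pointwise and should present no further difficulty.
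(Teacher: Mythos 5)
Your proposal is correct, and its part (ii) is essentially the paper's own argument: the paper also proves (ii) by checking bijectivity on $k'$-valued points for every field $k'\supseteq k$, identifying the fibre of $X_n(V)\to J_n$ over $\calL$ with $\bbP\bigl(H^0(C_{k'},\calL\otimes V_{k'})\bigr)$ via Lemma \ref{lem:coh and base chg}, and inverting the map by passing to the divisor of zeros $\div(t)$ of a section, which is exactly your saturation construction (note only that you should take the divisor of the cokernel with its multiplicities, not just its support, so that the degree bookkeeping $\deg D=n-d$ comes out right). The genuine difference is in (i). You convert the canonical fibrewise-nonzero section of $\calN\otimes p_1^*V$ into an invertible quotient of $h^*\calF$ by means of the duality/trace pairing, which requires the trace isomorphism $R^1p_{2*}p_1^*\omega_C\cong\calO_T$, fibrewise Serre duality to see the induced map on $R^1$ is nonzero on each fibre, and base change (Lemma \ref{lem:coh and base chg}) plus Nakayama to get surjectivity. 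The paper avoids duality at the family level: it twists the tautological surjection $p_1^*V^{\vee}\to s^*\calO(1)$ into a quotient map $s^*\calO(-1)\otimes\calO(f)^{\vee}\otimes p_1^*(V^{\vee}\otimes\omega_C)\to\calO(f)^{\vee}\otimes p_1^*\omega_C$, applies the right-exact top direct image $R^1p_{2*}$ and $\bbP^{\vee}$ to obtain a closed embedding $C_{n-d}\times_{J_{n-d}}T\hlra X_n(V)\times_{J_n}T$ (here Schwarzenberger's identification of $C_{n-d}$ as $\bbP^{\vee}\bigl(R^1p_{2*}(\calP_{n-d}^{\vee}\otimes p_1^*\omega_C)\bigr)$ is used), and then composes with the tautological point of $C_{n-d}\times_{J_{n-d}}T$. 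So the paper's route is lighter on machinery (only right-exactness of $R^1p_{2*}$), while yours bypasses the projective-bundle description of the symmetric product, using only its universal divisor; both constructions must track the $p_2^*\Pic(T)$-ambiguity in the Poincar\'e normalization, which you correctly flag and which the paper dispatches with the canonical isomorphism $\bbP^{\vee}\bigl(R^1p_{2*}(\calG\otimes p_2^*\calL)\bigr)\cong\bbP^{\vee}(R^1p_{2*}\calG)$ for $\calL\in\Pic(T)$.
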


\begin{proof}
 We recommend the reader to look at the proof of (ii) at first
 to know the geometric description of the morphism in (i)
 and then back to the proof of (i) for the scheme-theoretic construction of the morphism.
 
 (i) We construct a morphism between functors represented by $\Sect_d\bigl(\bbP(V)\bigr) \times C_{n-d}$ and $X_n(V)$.
 Let $T$ be a $k$-scheme and pick an element $(s,f)$ of $\Sect_d\bigl(\bbP(V)\bigr)(T) \times C_{n-d}(T)$.
 Let $\overline{f}$ denote the composition $T \stackrel{f}{\lra} C_{n-d} \lra J_{n-d}$, and $\calO(f)$ denote the sheaf $(\id_C \times \overline{f})^*\calP_{n-d}$ on $C \times T$.
 The point $s$ corresponds to a $C$-morphism $C \times T \lra \bbP(V)$,
 which is also denoted by $s$.
 The sheaf $s^*\calO(1)$ on $C \times T$ has constant degree $d$ along each slice $C \times \{t\}$,
 so it gives a morphism $\overline{s} \colon T \lra J_{d}$.
 Let $\overline{s}\otimes\overline{f} \colon T \lra J_n$ be the composition $T \stackrel{\overline{s} \times \overline{f}}{\lra} J_d \times J_{n-d} \stackrel{\otimes}{\lra} J_n$.
 Then the sheaf $\bigl(\id_C \times \overline{s}\otimes\overline{f}\bigr)^*\calP_n$ on $C \times T$ is isomorphic to $s^*\calO(1) \otimes \calO(f)$ modulo an element of $\Pic(T)$.
 
 The canonical map $p_1^*V^{\vee} \lra s^*\calO(1)$ on $C \times T$ gives rise to 
 a quotient map
 \[
  s^*\calO(-1) \otimes \calO(f)^{\vee} \otimes p_1^*(V^{\vee} \otimes \omega_C) \lra \calO(f)^{\vee} \otimes p_1^*\omega_C
 \]
 on $C \times T$.
 Applying the right-exact functor $R^1p_{2*}$
 and the construction $\bbP^{\vee}$,
 we obtain an embedding
 \begin{align*}
  C_{n-d} \times_{J_{n-d},\overline{f}} T &= \bbP^{\vee}\bigl(R^1p_{2*}(\calO(f)^{\vee}\otimes p_1^*\omega_C)\bigr)\\ &\hlra
  \bbP^{\vee}\bigl(R^1p_{2*}\bigl(s^*\calO(-1) \otimes \calO(f)^{\vee} \otimes p_1^*(V^{\vee} \otimes \omega_C)\bigr)\bigr) = X_n(V) \times_{J_n,\overline{s}\otimes\overline{f}} T,
 \end{align*}
  by Lemma \ref{lem:coh and base chg}.
 Note also that for any coherent sheaf $\calF$ on $C \times T$ and $\calL \in \Pic(T)$, there is a canonical isomorphism $\bbP^{\vee}\bigl(R^1p_{2*}(\calF \otimes p_2^*\calL)\bigr) \cong \bbP^{\vee}(R^1p_{2*}\calF)$.
 Now we associate a $T$-valued point of $X_n(V)$ defined by the composition
 \[
  T \stackrel{(f,\id_T)}{\lra} C_{n-d} \times_{J_{n-d},\overline{f}} T \lra X_n(V) \times_{J_n,\overline{s}\otimes\overline{f}} T \lra X_{n}(V)
 \]
 to the given pair $(s,f) \in \Sect_d\bigl(\bbP(V)\bigr)(T) \times C_{n-d}(T)$.
 The construction is clearly natural in $T$ and therefore defines a morphism $\Sect_d\bigl(\bbP(V)\bigr) \times C_{n-d} \lra X_n(V)$.

 (ii)
 We must prove that the morphism $m$ in the statement induces a bijection on the sets of $k'$-valued points for any field $k'$ containing $k$.
 As explained in the proof of (i), there is a morphism $\Sect_d\bigl(\bbP(V)\bigr) \lra J_d$ defined by $s \lmt s^*\calO(1)$.
 By the construction in (i), the following diagram commutes:
 \[
 \xymatrix{
 \coprod_{d\in\bbZ}\Sect_d\bigl(\bbP(V)\bigr)\times C_{n-d} \ar[r]^(0.7){m} \ar[d] & X_n(V) \ar[d]\\
 \coprod_{d\in\bbZ}J_d \times J_{n-d} \ar[r]^(0.65){\otimes} & J_n.
 }
 \]
 Fix a point $\calL \in J_n(k')$ and consider the fiber of the above diagram at $\calL$.
 Then the fiber of the upper horizontal morphism $m$ is described as follows:
 \begin{itemize}
  \item The source is isomorphic to 
	\begin{align*}
	 &\coprod_{d \in \bbZ} \bigl\{ (s,D) \in \Sect_d\bigl(\bbP(V)\bigr)(k') \times C_{n-d}(k') \bigm | s^*\calO(1) \otimes \calO(D) \cong \calL \bigr\} \\
	\cong & \coprod_{d \in \bbZ} \Bigl[\text{ the set of pairs } (s,l) \text{ where } s \in \Sect_d\bigl(\bbP(V)\bigr)(k') \text{ and } l \in \bbP\bigl(H^0(C_{k'},\calL \otimes s^*\calO(-1))\bigr)\Bigr].
	\end{align*}
  \item The target is isomorphic to $\bbP\bigl(H^0(C_{k'}, \calL \otimes V_{k'})\bigr)$ by Lemma \ref{lem:coh and base chg}.
  \item An element $(s,l) \in \Sect_d\bigl(\bbP(V)\bigr)(k') \times \bbP\bigl(H^0(C_{k'},\calL \otimes s^*\calO(-1))\bigr)$
	is mapped to the image of $l$ under the embedding
	\[
	\bbP\bigl(H^0(C_{k'},\calL \otimes s^*\calO(-1))\bigr) \hlra \bbP\bigl(H^0(C_{k'}, \calL \otimes V_{k'})\bigr)
	\]
	which is induced by the canonical inclusion $s^*\calO(-1) \hlra V_{k'}$.
 \end{itemize}
 By the last description of the map, one can easily check that it is bijective.
 The inverse map is given as follows:
 Represent an element of $\bbP\bigl(H^0(C_{k'}, \calL \otimes V_{k'})\bigr)$ by a nonzero global section $t$ of $\calL \otimes V_{k'}$.
 Consider the divisor of zeros $\div(t)$ (i.e., the coefficient of $\div(t)$ at a point $p \in C_{k'}$ is given by the largest integer $m$ such that $t_p \in \pi_p^m(\calL \otimes V_{k'})_p$ where $\pi_p$ is a uniformizer of $C_{k'}$ at $p$).
 Then the injective morphism $\calO(\div(t)) \hlra \calL \otimes V_{k'}$ of $\calO_{C_{k'}}$-modules defined by $1 \lmt t$ is locally splitting.
 Define $d = n - \deg(\div(t))$, and let $s \in \Sect_d\bigl(\bbP(V)\bigr)(k')$ be a section such that $s^*\calO(-1) = \calL^{\vee} \otimes \calO(\div(t)) \subset V_{k'}$, and $l$ be the line in $H^0\bigl(H_{k'},\calO(\div(t))\bigr)$ generated by $1$.
 Then the pair $(s,l)$ gives a lift of $t$.
 \end{proof}

\begin{cor}
  \[
 \sum_{d \in \bbZ} \bigl[\Sect_d\bigl(\bbP(V)\bigr)\bigr][C_{n-d}] = \bigl[X_n(V)\bigr] \text{ in }\widetilde{K}, \text{and}
  \]
  \[
  \zeta(t)Z\bigl(\bbP(V),t\bigr) = \sum_{n \in \bbZ}[X_n(V)]t^n \text{ in }\widetilde{K}[[t]].
  \]
\end{cor}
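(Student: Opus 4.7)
The plan is to deduce both identities directly from Proposition \ref{prop:computation of prod of sect and C} and standard properties of the Grothendieck ring $\widetilde K$.

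First I would establish the equality
\[
\sum_{d\in\bbZ}\bigl[\Sect_d\bigl(\bbP(V)\bigr)\bigr][C_{n-d}] = \bigl[X_n(V)\bigr] \text{ in }\widetilde K.
\]
The key observation is that for each fixed $n$, the coproduct $\coprod_{d\in\bbZ}\Sect_d\bigl(\bbP(V)\bigr)\times C_{n-d}$ is in fact a \emph{finite} disjoint union of quasi-projective $k$-varieties: by Lemma \ref{lem:Sect_d empty for small d} the factor $\Sect_d\bigl(\bbP(V)\bigr)$ is empty for $d$ sufficiently negative, while $C_{n-d}$ is empty for $d>n$ (since $C_m=\emptyset$ for $m<0$ by our convention). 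Consequently the class of this coproduct in $K$ (and hence in $\widetilde K$) is well-defined and equals $\sum_d [\Sect_d\bigl(\bbP(V)\bigr)][C_{n-d}]$, by additivity of $[-]$ on disjoint unions combined with multiplicativity on products. Proposition \ref{prop:computation of prod of sect and C}(ii) provides a radicial surjective morphism $m$ from this coproduct onto $X_n(V)$, and the very definition of the ideal $I\subset K$ cut out by radicial surjections then yields the displayed identity in $\widetilde K$.

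The second identity is a purely formal consequence. I would expand
\[
\zeta(t)Z\bigl(\bbP(V),t\bigr) = \Bigl(\sum_{n\geq 0}[C_n]t^n\Bigr)\Bigl(\sum_{d\in\bbZ}\bigl[\Sect_d\bigl(\bbP(V)\bigr)\bigr]t^d\Bigr) = \sum_{n\in\bbZ}\Bigl(\sum_{d\in\bbZ}\bigl[\Sect_d\bigl(\bbP(V)\bigr)\bigr][C_{n-d}]\Bigr)t^n,
\]
where the rearrangement is justified because for each fixed $n$ only finitely many terms contribute, as noted above. Substituting the first identity coefficient-wise yields $\sum_n [X_n(V)]t^n$, proving the second formula.

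There is no real obstacle: the entire geometric content has been absorbed into the construction of the radicial surjection $m$ in Proposition \ref{prop:computation of prod of sect and C}, and the corollary is simply an unwinding of the definitions of $\widetilde K$, the motivic zeta function, and the motivic height zeta function. The only bookkeeping point worth flagging is the finiteness of the convolution sum at each degree $n$, which is handled by Lemma \ref{lem:Sect_d empty for small d} together with the vanishing of $C_m$ in negative degrees.
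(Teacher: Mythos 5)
Your proposal is correct and follows essentially the same route as the paper, which derives the corollary directly from Proposition \ref{prop:computation of prod of sect and C} and the definition of $\widetilde{K}$; you have merely made explicit the finiteness bookkeeping (via Lemma \ref{lem:Sect_d empty for small d} and the vanishing of $C_m$ for $m<0$) that the paper leaves implicit.
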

\begin{proof}
  This follows from Proposition \ref{prop:computation of prod of sect and C}
  and the definition of $\widetilde{K}$.
\end{proof}

 From now we mimic Kapranov's arguments to prove the rationality and the functional equation of $\zeta(t)Z\bigl(\bbP(V),t\bigr)$.
 
\subsection{The Riemann-Roch and a vanishing result}
 In the ring $K$, $[\bbP^n] = 1 + \bbL + \cdots + \bbL^n$ for $n \geq 0$.
 From this we have the equality $[\bbP^m] - \bbL^{m-n}[\bbP^n] = [\bbP^{m-n-1}]$ for all $m,n \in \bbZ$ such that $m > n \geq 0$.
 In particular, there is a recursive relation $[\bbP^{n+1}] - \bbL[\bbP^n] = 1$.
 We also define $[\bbP^n]$ for negative $n$ in the ring $\widetilde{K}$ by this relation.
 Then the formula $[\bbP^m] - \bbL^{m-n}[\bbP^n] = [\bbP^{m-n-1}]$ holds for all $m,n \in \bbZ$ in $\widetilde{K}$.
\begin{prop}
\label{prop:motivic R-R}
 Let $V$ be a vector bundle over $C$ of rank $r$.
 \begin{enumerate}
  \item (Riemann-Roch) For all $n \in \bbZ$, 
 \[
 [X_n(V)] - \bbL^{r(n+1-g)+\deg V}[X_{2g-2-n}(V^{\vee})]
 = [\bbP^{r(n+1-g)+\deg V-1}][J].
 \]
  \item (Vanishing) $X_n(V) = \emptyset$ for enough small $n$.
\end{enumerate}
\end{prop}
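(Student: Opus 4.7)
The plan is to reduce to a fiberwise calculation via classical Riemann-Roch and Serre duality on $C$, and then globalize through a stratification of $J_n$.

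Set $N = r(n+1-g)+\deg V$. By Lemma \ref{lem:coh and base chg} combined with Serre duality, the fiber of $X_n(V) \to J_n$ over $\calL \in J_n(k')$ is $\bbP\bigl(H^0(C_{k'}, \calL \otimes V_{k'})\bigr)$, and classical Riemann-Roch together with Serre duality on $C$ give
\[
 h^0(C_{k'}, \calL \otimes V) - h^0(C_{k'}, \calL^{\vee} \otimes \omega_C \otimes V^{\vee}) = N.
\]
Under the isomorphism $\sigma : J_n \conglra J_{2g-2-n}$, $\calL \lmt \calL^{\vee} \otimes \omega_C$, this matches the fiber of $X_n(V)$ at $\calL$ (of dimension $h^0(C,\calL\otimes V)-1$) with the fiber of $X_{2g-2-n}(V^{\vee})$ at $\sigma(\calL)$ (of dimension $h^0(C,\calL^{\vee}\otimes\omega_C\otimes V^{\vee}) - 1 = h^1(C,\calL\otimes V) - 1$).

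Next I would stratify $J_n$ by $J_n^{(i)} = \{\calL \mid h^1(C, \calL \otimes V) = i\}$, a finite disjoint union of locally closed subschemes by semicontinuity. Constancy of Euler characteristic $= N$ on $J_n^{(i)}$ makes the fiber dimensions of $X_n(V)$ and $\sigma^{*}X_{2g-2-n}(V^{\vee})$ constant on $J_n^{(i)}$, equal to $N+i-1$ and $i-1$ respectively. Cohomology and base change then forces the higher direct images defining these two varieties to be locally free of the corresponding ranks on $J_n^{(i)}$, so that $X_n(V)|_{J_n^{(i)}}$ is a Zariski-locally trivial $\bbP^{N+i-1}$-bundle and $\sigma^{*}X_{2g-2-n}(V^{\vee})|_{J_n^{(i)}}$ is a Zariski-locally trivial $\bbP^{i-1}$-bundle. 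Since the projectivization of any rank-$r$ vector bundle on $B$ has class $[B]\cdot[\bbP^{r-1}]$ in $K$,
\[
 [X_n(V)] = \sum_i [J_n^{(i)}][\bbP^{N+i-1}], \qquad [X_{2g-2-n}(V^{\vee})] = \sum_i [J_n^{(i)}][\bbP^{i-1}]
\]
in $\widetilde{K}$. Combining with the identity $[\bbP^{N+i-1}] - \bbL^N [\bbP^{i-1}] = [\bbP^{N-1}]$ noted just before the proposition, and using $[J_n] = [J]$ via translation by $\calO(np_0)$, yields statement (i).

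For (ii), pick an ample line bundle $A$ on $C$ such that $V^{\vee}\otimes A$ is globally generated, giving a surjection $\calO_C^{\oplus M} \to V^{\vee}\otimes A$. Dualizing produces an embedding $V \hlra A^{\oplus M}$. Then for any extension $k'/k$ and $\calL \in J_n(k')$, we obtain $H^0(C_{k'}, \calL \otimes V_{k'}) \hlra H^0(C_{k'}, \calL \otimes A_{k'})^{\oplus M}$, which vanishes once $\deg \calL + \deg A < 0$. Hence $X_n(V) = \emptyset$ for $n < -\deg A$. The main obstacle is the cohomology-and-base-change input: one must verify that $h^1$ is locally constant on each stratum $J_n^{(i)}$ (hence, via constant Euler characteristic, so is $h^0$) and deduce local freeness of the two relevant higher direct images simultaneously. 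This is standard (EGA III or Hartshorne III.12), but requires careful bookkeeping to confirm that the stratifications on $J_n$ and $J_{2g-2-n}$ match under $\sigma$ so that the subtraction in the Riemann-Roch identity is legitimate stratum by stratum. Once that is set up, the remainder is a formal manipulation in $\widetilde{K}$.
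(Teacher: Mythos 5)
Your proof is correct and follows essentially the same route as the paper: the paper also stratifies $J_n$ (after identifying it with $J_{2g-2-n}$ via $\calL \mapsto \calL^{\vee}\otimes\omega_C$) into locally closed pieces on which both sheaves $R^1p_{2*}(\calP_n^{\vee}\otimes p_1^*V^{\vee}\otimes p_1^*\omega_C)$ and $R^1p_{2*}(\calP_{2g-2-n}^{\vee}\otimes p_1^*V\otimes p_1^*\omega_C)$ are free, uses classical Riemann--Roch for the rank difference $r(n+1-g)+\deg V$, and sums the resulting projective-bundle identities in $\widetilde{K}$; your explicit $h^1$-stratification (with base change for the top direct image, as in Lemma \ref{lem:coh and base chg}) and your ample-embedding argument for (ii) are just concrete instantiations of the paper's steps.
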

\begin{proof}
 (i) We identify $J_{n}$ and $J_{2g-2-n}$ via the map $\calL \lmt \calL^{\vee} \otimes \omega_C$.
 Let $J_n = \coprod_{i}Z_i$ be a finite decomposition into locally closed subsets such that both of the sheaves $R^1p_{2*}(\calP^{\vee}_n \otimes p_1^*V^{\vee} \otimes p_1^*\omega_C)$ on $J_n$ and $R^1p_{2*}(\calP^{\vee}_{2g-2-n} \otimes p_1^*V \otimes p_1^*\omega_C)$ on $J_{2g-2-n}$ are free on each $Z_i$.
 By Riemann-Roch theorem, the difference of ranks of these sheaves is $r(n+1-g)+\deg V$.
 So we have
 \[
  [X_n(V)|_{Z_i}] - \bbL^{r(n+1-g)+\deg V}[X_{2g-2-n}(V^{\vee})|_{Z_i}]
 = [\bbP^{r(n+1-g)+\deg V-1}][Z_i]
 \]
 for each $i$.
 Summing up these equalities gives the desired result.

 (ii) This is clear from Lemma \ref{lem:coh and base chg}
 and the fact that the is a constant $n(V)$ depending only on $V$ such that $H^0(C_{k'},\calL \otimes V_{k'}) = 0$ for $\deg \calL \leq n(V)$.
\end{proof}

\subsection{Rationality and functional equations}
\begin{lem}
\label{lem:lem on power series with coef P^n}
 \begin{enumerate}
  \item Let $a,b$ be integers.
	The power series
	\[
	 g(t) = (t-1)(t-\bbL^{-a})\sum_{n \geq 0}[\bbP^{an+b}]t^n \in \calM[[t]]
	\]
	is a polynomial.
	The value of $g(t)$ at $t=\bbL^{-a}$ is $\bbL^{b-a}\bigl(1-[\bbP^{-a}]\bigr)$.
  \item The series $\sum_{n < 0}[\bbP^{an+b}]t^n$ is also a rational function of $t^{-1}$, and as an element of $\calM(t^{-1}) = \calM(t)$ we have
	\[
	 \sum_{n < 0}[\bbP^{an+b}]t^n + \sum_{n \geq 0}[\bbP^{an+b}]t^n = 0.
	\]
 \end{enumerate}
\end{lem}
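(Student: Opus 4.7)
The plan is to exploit the first-order recursion in $n$ hidden in the relation $[\bbP^m] - \bbL^{m-n}[\bbP^n] = [\bbP^{m-n-1}]$. Specializing to $m = an+b$ and $n \mapsto a(n-1)+b$ gives
\[
 [\bbP^{an+b}] - \bbL^{a}[\bbP^{a(n-1)+b}] = [\bbP^{a-1}]
\]
for every $n \in \bbZ$, with no positivity assumption on the indices. This is the only ingredient I will use.

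For part (i), let $S(t) = \sum_{n \geq 0}[\bbP^{an+b}]t^{n}$ and apply the recursion termwise to the telescoping combination $(1 - \bbL^{a} t) S(t)$. The terms of index $n \geq 1$ collapse to the constant $[\bbP^{a-1}]$, yielding
\[
 (1 - \bbL^{a} t) S(t) = [\bbP^{b}] + \frac{[\bbP^{a-1}]\,t}{1 - t}.
\]
Multiplying by $(1-t)$ and using $(t-1)(t - \bbL^{-a}) = \bbL^{-a}(1-t)(1 - \bbL^{a} t)$, I obtain the explicit closed form $g(t) = \bbL^{-a}\bigl((1-t)[\bbP^{b}] + t\,[\bbP^{a-1}]\bigr)$, which is linear in $t$; this already proves the first assertion. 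For the evaluation at $t = \bbL^{-a}$, I use the telescoping identity $1 - [\bbP^{-a}] = \bbL^{-(a-1)}[\bbP^{a-1}]$ coming directly from the definition of $[\bbP^{-a}]$; a short manipulation then converts both $g(\bbL^{-a})$ and $\bbL^{b-a}(1-[\bbP^{-a}])$ to the same expression $\bbL^{b-2a+1}[\bbP^{a-1}]$.

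For part (ii), I apply exactly the same recursion to $T(t) = \sum_{n < 0}[\bbP^{an+b}]t^{n}$, now interpreted inside $\calM((t^{-1})) \subset \calM(t)$. Using the identity $\sum_{n<0} t^{n} = 1/(t-1)$ in that ring, the telescoping gives
\[
 (1 - \bbL^{a} t) T(t) = -\bbL^{a}[\bbP^{b-a}] + \frac{[\bbP^{a-1}]}{t-1}.
\]
Multiplying by $(1-t)$ and adding the analogous identity from (i), the recursion in the form $[\bbP^{b}] - \bbL^{a}[\bbP^{b-a}] = [\bbP^{a-1}]$ makes every term cancel, so $(1-t)(1 - \bbL^{a} t)\bigl(S(t) + T(t)\bigr) = 0$ and hence $S(t) + T(t) = 0$ in $\calM(t)$, as required.

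The only subtlety is bookkeeping: one has to interpret $S(t)$ and $T(t)$ as elements of the same field $\calM(t)$ (the first via its rational expression from (i), the second via its expansion around infinity) in order for the cancellation in (ii) to take place at the level of rational functions. Beyond that, the argument is a direct one-variable recursion unfolding and I do not expect any real obstacle.
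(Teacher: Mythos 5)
Your proof is correct and is essentially the same direct computation as in the paper: both arguments unfold the defining recursion for $[\bbP^n]$ (the paper first reduces to $b=0$ and states the closed form $\bbL^{-a}\bigl(1+\bbL[\bbP^{a-2}]t\bigr)$, while you telescope with the step-$a$ relation $[\bbP^{an+b}]-\bbL^a[\bbP^{a(n-1)+b}]=[\bbP^{a-1}]$ directly for general $b$, obtaining the equivalent form $\bbL^{-a}\bigl((1-t)[\bbP^b]+t[\bbP^{a-1}]\bigr)$, and treat the expansion at infinity the same way for (ii)). The only point worth spelling out is that your ``short manipulation'' at $t=\bbL^{-a}$ should use the multiplicative identity $(\bbL-1)[\bbP^{m}]=\bbL^{m+1}-1$ (valid for all $m\in\bbZ$) to get $(\bbL^a-1)[\bbP^b]=(\bbL^{b+1}-1)[\bbP^{a-1}]$, rather than cancelling a factor $\bbL-1$, since $\calM$ need not be a domain.
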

\begin{proof}
 By the recursive relation $[\bbP^{an+b}] = \bbL[\bbP^{an+b-1}]+1$,
 we easily reduce the proof to the case that $b = 0$.
 Then, by a direct computation, one can show that
 \[
  (t-1)(t-\bbL^{-a})\sum_{n \geq 0}[\bbP^{an}]t^n = \bbL^{-a}\bigl(1+\bbL[\bbP^{a-2}]t\bigr),
 \]
 and the statement of the lemma follows from it easily.
\end{proof}
 
Our motivic main result is the following theorem.
\begin{thm}
\label{thm:motivic main theorem}
 The product $\zeta(t)Z\bigl(\bbP(V),t\bigr)$ is rational as a series in $\calM((t))$ with denominator $(t-1)(t-\bbL^{-r})$.
 The value of $(t-1)(t-\bbL^{-r})\zeta(t)Z\bigl(\bbP(V),t\bigr)$ at $t = \bbL^{-r}$
 is $\bbL^{-rg-1+\deg V}\bigl(1-[\bbP^{-r}]\bigr)[J]$.
 Moreover, these functions satisfy the following functional equations:
 \[
  \zeta(t)Z\bigl(\bbP(V),t\bigr) = \bbL^{\deg V+r(g-1)}t^{2g-2}\zeta(\bbL^{-r}t^{-1})Z\bigl(\bbP(V^{\vee}),\bbL^{-r}t^{-1}\bigr).
 \]
 in $\calM((t))$.
\end{thm}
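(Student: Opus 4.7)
The plan is to adapt the proof of the number-field case in \S\ref{ssect:Application of Riemann-Roch}, with the arithmetic Riemann-Roch replaced by its motivic counterpart Proposition~\ref{prop:motivic R-R}(i) and the integration on $\Pic(F)$ replaced by the formal manipulations in Lemma~\ref{lem:lem on power series with coef P^n}. The starting point is the identity $\zeta(t)Z\bigl(\bbP(V),t\bigr) = \sum_{n \in \bbZ}[X_n(V)]t^n$ in $\widetilde{K}((t))$ from the corollary above. By Proposition~\ref{prop:motivic R-R}(ii) there exists $N$ with $X_{2g-2-n}(V^{\vee}) = \emptyset$ for all $n \geq N$, so Riemann-Roch collapses to $[X_n(V)] = [\bbP^{r(n+1-g)+\deg V-1}][J]$ in this range and
\[
 \zeta(t)Z\bigl(\bbP(V),t\bigr) = P(t) + [J]\,t^{N}\sum_{m \geq 0}[\bbP^{rm + rN + r(1-g)+\deg V - 1}]t^m
\]
for some Laurent polynomial $P(t)$. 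Lemma~\ref{lem:lem on power series with coef P^n}(i) applied with $a = r$ to the tail yields the rationality with denominator $(t-1)(t-\bbL^{-r})$. At $t = \bbL^{-r}$ the factor $(t-\bbL^{-r})$ kills $(t-1)(t-\bbL^{-r})P(t)$; the lemma's explicit value of the tail, after the $t^{N}$ shift cancels the $N$-dependence, leaves $\bbL^{-rg - 1 + \deg V}\bigl(1 - [\bbP^{-r}]\bigr)[J]$ as claimed.

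The functional equation is the subtlest part, since it requires handling a two-sided series. I would split $\zeta(t)Z\bigl(\bbP(V),t\bigr) = A(V,t) + B(V,t)$ with $A(V,t) = \sum_{n \leq g-1}[X_n(V)]t^n$ (a Laurent polynomial by vanishing) and $B(V,t) = \sum_{n \geq g}[X_n(V)]t^n$, set $u = \bbL^{-r}t^{-1}$, and apply Riemann-Roch term by term. The substitution $m = 2g-2-n$ converts $B(V,t)$ into
\[
 \bbL^{r(g-1)+\deg V}t^{2g-2}\bigl(A(V^{\vee},u) - [X_{g-1}(V^{\vee})]u^{g-1}\bigr) + [J]\sum_{n \geq g}[\bbP^{r(n+1-g)+\deg V-1}]t^n,
\]
and symmetrically $A(V,t)$ becomes $\bbL^{r(g-1)+\deg V}t^{2g-2}\bigl(B(V^{\vee},u) + [X_{g-1}(V^{\vee})]u^{g-1}\bigr) + [J]\sum_{n \leq g-1}[\bbP^{r(n+1-g)+\deg V-1}]t^n$. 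Upon summing, the boundary terms $\pm[X_{g-1}(V^{\vee})]u^{g-1}$ cancel, $A(V^{\vee},u) + B(V^{\vee},u)$ reassembles into $\zeta(u)Z\bigl(\bbP(V^{\vee}),u\bigr)$, and the two tails combine into the two-sided sum $[J]\sum_{n \in \bbZ}[\bbP^{r(n+1-g)+\deg V - 1}]t^n$, which vanishes in $\calM(t)$ by Lemma~\ref{lem:lem on power series with coef P^n}(ii).

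The main obstacle is conceptual rather than computational: the two-sided series above does not converge in $\calM((t))$, since the negative-dimensional classes $[\bbP^m]$ never vanish, so each half must be interpreted separately as a rational function in $\calM(t)$ via Lemma~\ref{lem:lem on power series with coef P^n}(i) before part (ii) can deliver the cancellation. Keeping careful track of whether a given identity is valid in $\calM((t))$ or only in $\calM(t)$ throughout the computation is the key technical subtlety.
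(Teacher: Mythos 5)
Your proposal is correct and takes essentially the same route as the paper: the identity $\zeta(t)Z\bigl(\bbP(V),t\bigr)=\sum_{n}[X_n(V)]t^n$, the motivic Riemann--Roch and vanishing of Proposition \ref{prop:motivic R-R}, Lemma \ref{lem:lem on power series with coef P^n}(i) for rationality and the value at $t=\bbL^{-r}$, and the substitution $m=2g-2-n$ together with Lemma \ref{lem:lem on power series with coef P^n}(ii) for the functional equation. Your split of the series at $n=g-1$, with the cancelling boundary terms, is simply a more careful bookkeeping of the paper's direct manipulation of the two-sided sums (which the paper likewise interprets as rational functions via Lemma \ref{lem:lem on power series with coef P^n}(ii)), and the subtlety you flag about $\calM((t))$ versus $\calM(t)$ is exactly the point the paper passes over tacitly.
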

\begin{proof}
 By Proposition \ref{prop:motivic R-R}, it follows that
 \[
  \zeta(t)Z\bigl(\bbP(V),t\bigr) = (\text{polynomial}) + \sum_{n \geq 0}[\bbP^{r(n+1-g)+\deg V - 1}][J]t^n.
 \]
 Then the rationality and the result on the behaviour at $t = \bbL^{-r}$ follow from Lemma \ref{lem:lem on power series with coef P^n} (i).

 Moreover, by the Riemann-Roch, we have
 \[
  \zeta(t)Z\bigl(\bbP(V),t\bigr) = 
  \sum_{n \in \bbZ}\bbL^{r(n+1-g)+\deg V}\bigl[X_{2g-2-n}(V^{\vee})\bigr]t^n + \sum_{n \in \bbZ}[\bbP^{r(n+1-g)+\deg V-1}][J]t^n.
 \]
 The second summation is zero by \ref{lem:lem on power series with coef P^n} (ii).
 Then, by substituting $m$ for $2g-2-n$, we obtain
 \begin{align*}
 \zeta(t)Z\bigl(\bbP(V),t\bigr) &= \bbL^{r(1-g)+\deg V}\sum_{m \in \bbZ}\bbL^{r(2g-2-m)}\bigl[X_m(V^{\vee})\bigr]t^{2g-2-m} \\
  &= \bbL^{r(g-1)+\deg V}t^{2g-2}\sum_{m \in \bbZ}\bigl[X_m(V^{\vee})\bigr](\bbL^{-r}t^{-1})^{m} \\
  &= \bbL^{r(g-1)+\deg V}t^{2g-2}\zeta(t)Z\bigl(\bbP(V^{\vee},\bbL^{-r}t^{-1})\bigr).
 \end{align*}
\end{proof}

In the situation where $[\bbG_m]$ is invertible,
the residue of $Z\bigl(\bbP(V),t\bigr)$ at $t = \bbL^{-r}$
is expressed in a very similar form as in Theorem \ref{thm:main theorem}.
\begin{cor}
 Let $R$ be a field and $\mu \colon \calM \lra R$ be a ring homomorphism such that $\mu(\bbL) \neq 1$.
 Let $\zeta_{\mu}(t) \in R[[t]]$ (resp. $Z_{\mu}\bigl(\bbP(V),t\bigr) \in R((t))$) be the power series obtained by evaluating each coefficient of $\zeta(t)$ (resp. $Z\bigl(\bbP(V),t\bigr)$) by $\mu$.
 Assume that $\zeta_{\mu}\bigl(\mu(\bbL)^{-r}\bigr) \neq 0$.
 Then $Z_{\mu}\bigl(\bbP(V),t\bigr)$ is a rational function of $t$ with an unique simple pole at $t = \mu(\bbL)^{-r}$ and
 \[
  \Res_{t = \mu(\bbL)^{-r}}Z_{\mu}\bigl(\bbP(V),t\bigr) = -\frac{\mu([J])\mu(\bbL)^{\deg V}}{\mu([\bbG_m])\mu(\bbL)^{rg}\zeta_{\mu}\bigl(\mu(\bbL)^{-r}\bigr)}.
 \]
\end{cor}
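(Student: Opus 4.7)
The strategy is to push the statement of Theorem \ref{thm:motivic main theorem} through the ring homomorphism $\mu$ and then simplify using the definition of $[\bbP^{-r}]$. Writing $q = \mu(\bbL)$ and applying $\mu$ coefficient-by-coefficient to the polynomial identity
\[
(t-1)(t-\bbL^{-r})\zeta(t)Z\bigl(\bbP(V),t\bigr) = g(t)
\]
supplied by Theorem \ref{thm:motivic main theorem} (with $g(\bbL^{-r}) = \bbL^{-rg-1+\deg V}(1-[\bbP^{-r}])[J]$), I obtain
\[
(t-1)(t-q^{-r})\zeta_{\mu}(t)\, Z_{\mu}\bigl(\bbP(V),t\bigr) = g_{\mu}(t) \in R[t],
\]
with $g_{\mu}(q^{-r}) = q^{-rg-1+\deg V}\bigl(1-\mu([\bbP^{-r}])\bigr)\mu([J])$.

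I would then invoke Kapranov's expression $\zeta(t) = f(t)/((1-t)(1-\bbL t))$, which becomes $\zeta_{\mu}(t) = \mu(f)(t)/((1-t)(1-qt))$ under $\mu$, to rewrite
\[
Z_{\mu}\bigl(\bbP(V),t\bigr) = \frac{-(1-qt)\,g_{\mu}(t)}{(t-q^{-r})\,\mu(f)(t)},
\]
which is manifestly rational in $t$. The factor $(t-1)$ in the original denominator is absorbed by the pole of $\zeta_{\mu}$ at $t=1$, and the assumption $\zeta_{\mu}(q^{-r}) \neq 0$ is exactly the statement that $\mu(f)(q^{-r}) \neq 0$, so the pole of $Z_{\mu}\bigl(\bbP(V),t\bigr)$ at $t = q^{-r}$ is genuinely simple (any other poles would have to come from zeros of $\mu(f)$, i.e., zeros of $\zeta_{\mu}$).

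The residue then reads
\[
\Res_{t=q^{-r}} Z_{\mu}\bigl(\bbP(V),t\bigr) = \frac{g_{\mu}(q^{-r})}{(q^{-r}-1)\,\zeta_{\mu}(q^{-r})},
\]
and the rest is bookkeeping. From $[\bbP^{-r}] = -\bbL^{-(r-1)} - \cdots - \bbL^{-1}$ I compute
\[
1 - \mu([\bbP^{-r}]) = \sum_{i=0}^{r-1} q^{-i} = \frac{1-q^{-r}}{1-q^{-1}},
\]
so that $(1 - \mu([\bbP^{-r}]))/(q^{-r}-1) = -1/(1-q^{-1}) = -q/\mu([\bbG_m])$ using $\mu([\bbG_m]) = q - 1$. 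Substituting yields exactly the stated formula. The only delicate step is this final simplification: the factor $(1-q^{-r})$ contributed by the $[\bbP^{-r}]$ identity must cancel against $(q^{-r}-1)$ in the denominator, and this is precisely what converts the residue into the clean form involving $\mu([\bbG_m])^{-1}$, in direct analogy with the role $\xi(r)$ plays in the residue formula of Theorem \ref{thm:main theorem}.
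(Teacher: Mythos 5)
Your proposal is correct and follows essentially the same route as the paper: apply Theorem \ref{thm:motivic main theorem}, evaluate at $t=\mu(\bbL)^{-r}$ after pushing through $\mu$, and simplify via the identity $\bigl(1-\mu([\bbP^{-r}])\bigr)/\bigl(\mu(\bbL)^{-r}-1\bigr)=-\mu(\bbL)/\mu([\bbG_m])$, which is exactly the computation the paper's one-line proof invokes. You merely make explicit the intermediate step of expressing $Z_{\mu}\bigl(\bbP(V),t\bigr)$ through Kapranov's rationality of $\zeta(t)$, which the paper leaves implicit.
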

\begin{proof}
 This follows from Theorem \ref{thm:motivic main theorem} and the fact that
 \[
  \frac{1-\mu([\bbP^{-r}])}{\mu(\bbL)^{-r}-1} = - \frac{\mu(\bbL)}{\mu(\bbL)-1} = - \frac{\mu(\bbL)}{\mu([\bbG_m])}.
 \]
\end{proof}

\section{Hirzebruch surfaces}
The methods in \S\ref{sec:Height zeta functions} are also useful to study height zeta functions of projective bundles over a variety $X$ when the height zeta function $Z(X,s)$ of $X$ is well understood.
To demonstrate this, we study height zeta functions of Hirzebruch surfaces.

\subsection{Setting}
\label{ssect:Setting}
Fix an Arakelov vector bundle $V$ of rank $2$ over $F$
and let $\bbP^1 = \bbP(V)$ denote the associated projective line.
For an integer $e \geq 0$, let $F_e = \bbP\bigl(\calO_{\bbP^1} \otimes \calO_{\bbP^1}(e)\bigr)$ be the Hirzebruch surface and $\pi \colon F_e \lra \bbP^1$ be the canonical projection.
Let $\calO(a,b)$ denote the sheaf $\calO_{F_e}(a) \otimes \pi^*\calO_{\bbP^1}(b)$ on $F_e$, where $\calO_{F_e}(1)$ is the relative Serre's twisting sheaf of the projective bundle $\pi \colon F_e \lra \bbP^1$.
It is known that any invertible sheaf on $F_e$ is isomorphic to the sheaf $\calO(a,b)$ for some $(a,b) \in \bbZ^2$,
and it is ample iff $a > 0$ and $b > ae$ (see the section of ruled surfaces in \cite{Hartshorne}).
Fix a pair $(a,b)$ with this condition.
For a technical reason we assume that $e \geq 2$.

For a rational point $P \in F_e(F)$, the $F$-vector space $P^*\calO(a,b)$ is naturally endowed with a structure of an Arakelov line bundle.
We define the height $H(P) = H_{a,b}(P)$ as the norm $N\bigl(P^*\calO(a,b)\bigr)$.
If $Q = \pi(P)$, then $H_{a,b}(P)$ is equal to $H_{\pi^{-1}(Q)}(P)^a H_{\bbP^1}(Q)^b$
(here we see $\pi^{-1}(Q)$ as the projective space associated to the Arakelov vector bundle $Q^*\bigl(\calO_{\bbP^1}\oplus\calO_{\bbP^1}(e)\bigr)$ and consider the height function on it defined in \S\ref{ssec:Height}).
\begin{lem}
 The height zeta function $Z(F_e,s) = \sum_{P \in F_e(F)}H_{a,b}(P)^{-s}$
 satisfies
 \[
  Z(F_e,s) = \sum_{Q \in \bbP^1(F)}H_{\bbP^1}(Q)^{-bs}Z\bigl(\pi^{-1}(Q),as\bigr).
 \]
\end{lem}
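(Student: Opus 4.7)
The plan is to regroup the sum defining $Z(F_e,s)$ along the fibers of $\pi$ and then apply the height factorization stated just before the lemma. Since the Hirzebruch surface $F_e = \bbP\bigl(\calO_{\bbP^1}\oplus\calO_{\bbP^1}(e)\bigr)$ admits a section of $\pi$ (e.g.\ the one corresponding to the summand $\calO_{\bbP^1}$), the map $\pi \colon F_e(F) \lra \bbP^1(F)$ is surjective, and we have a partition
\[
 F_e(F) = \coprod_{Q \in \bbP^1(F)}\pi^{-1}(Q)(F).
\]
Under this partition the height zeta function splits as a double sum, so I would first write
\[
 Z(F_e,s) = \sum_{Q \in \bbP^1(F)}\sum_{P \in \pi^{-1}(Q)(F)}H_{a,b}(P)^{-s}.
\]

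The second step is to invoke the factorization $H_{a,b}(P) = H_{\pi^{-1}(Q)}(P)^{a}\,H_{\bbP^1}(Q)^{b}$ from the paragraph preceding the lemma. Plugging this in and pulling the $Q$-dependent factor $H_{\bbP^1}(Q)^{-bs}$ out of the inner sum gives
\[
 Z(F_e,s) = \sum_{Q \in \bbP^1(F)}H_{\bbP^1}(Q)^{-bs}\sum_{P \in \pi^{-1}(Q)(F)}H_{\pi^{-1}(Q)}(P)^{-as},
\]
and the inner sum is by definition $Z\bigl(\pi^{-1}(Q),as\bigr)$, yielding the asserted formula. One might also want to note absolute convergence, but this is automatic once everything is convergent (or else one reads the identity as formal Dirichlet series, both sides matching term by term after collecting by the value of $H_{a,b}$).

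The only substantive point, and the main obstacle, is to justify the height factorization as an identity of Arakelov norms, not merely of line bundles over $F$. One has the obvious identification $P^*\calO(a,b) \cong \bigl(P^*\calO_{F_e}(1)\bigr)^{\otimes a}\otimes Q^*\calO_{\bbP^1}(b)$ and must check that this is an isometry for the natural Hermitian metrics at each infinite place. This reduces to the assertion that, upon identifying the fiber $\pi^{-1}(Q)$ with $\bbP\bigl(Q^*(\calO_{\bbP^1}\oplus\calO_{\bbP^1}(e))\bigr)$ as $F$-schemes, the restriction of $\calO_{F_e}(1)$ to this fiber coincides as an Arakelov line bundle with the tautological $\calO(1)$ of this projective space in the sense of \S\ref{ssec:Height}. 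This compatibility is essentially built into the construction of the metric on the projective bundle, so once the Arakelov structure on $\calO_{F_e}(1)$ is fixed as the one induced from $V$ and the Fubini–Study type construction on $\calO_{\bbP^1}\oplus\calO_{\bbP^1}(e)$, the identification is immediate; the norms multiply at every place, and taking the product over places gives the claimed factorization of $H_{a,b}(P)$.
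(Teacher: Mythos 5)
Your proof is correct and follows essentially the same route as the paper: the paper's proof is simply the observation that the lemma is immediate from the factorization $H_{a,b}(P) = H_{\pi^{-1}(Q)}(P)^a H_{\bbP^1}(Q)^b$ stated just before it, i.e.\ exactly your regrouping of the sum over the fibers of $\pi$. Your additional paragraph checking that the factorization holds at the level of Arakelov norms is a reasonable elaboration of a point the paper takes for granted, but it does not change the argument.
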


\begin{proof}
 This is clear from the description of the height in the above.
\end{proof}

\subsection{Strategy}
We are going to show that $Z(F_e,s)$ defines a meromorphic function on the domain $D = \bigl\{s \in \bbC \bigm| \Re(s) > \max\{1/a,(e+2)/b\} \bigr\}$,
and its largest pole is at $s = \max\{2/a,2/(b-ea)\}$.
Note that $(e+2)/b < (e+2)/ae = 1/a + 2/ae \leq 2/a$ since $e \geq 2$, so $s = 2/a$ is always in $D$.

By the formula (\ref{eq:star}) in \S\ref{ssect:Application of Riemann-Roch}, the function $w\xi(as)Z\bigl(\pi^{-1}(Q),as\bigr)$ is the sum of the following $4$ functions:
\begin{align*}
 G_{1,Q}(s) &= - \frac{\alpha|\Delta|^{-\frac{as}{2}}}{as},\\
 G_{2,Q}(s) &= \frac{\alpha N\bigl(Q^*(\calO_{\bbP^1}\oplus\calO_{\bbP^1}(e))\bigr)|\Delta|^{-\frac{as}{2}}}{s-2} = \frac{\alpha H_{\bbP^1}(Q)^e|\Delta|^{-\frac{as}{2}}}{as-2}, \\
 G_{3,Q}(s) &= |\Delta|^{1-as}H(Q)^e\int_{E_{-}}N(L)^{as-2}\varphi\bigl(L\oplus(L\otimes Q^*\calO_{\bbP^1}(-e))\bigr)dL, \\
 G_{4,Q}(s) &= \int_{E_{-}}N(L)^{-as}\varphi\bigl(L\oplus(L\otimes Q^*\calO_{\bbP^1}(e))\bigr)dL.
\end{align*}
From now we try to find the largest pole of the series $F_i(s) = \sum_{Q \in \bbP^1(F)}H_{\bbP^1}(Q)^{-bs}G_{i,Q}(s)$ in the domain $D$ for each $i \in \{1,2,3,4\}$.

\subsection{First three parts}
The first part $F_1(s)$ is
\[
 F_1(s) = - \frac{\alpha|\Delta|^{-\frac{as}{2}}}{as}\sum_{Q \in \bbP^1(F)}H_{\bbP^1}(Q)^{-bs} = - \frac{\alpha|\Delta|^{-\frac{as}{2}}}{as}Z\bigl(\bbP^1,bs\bigr).
\]
By Theorem \ref{thm:main theorem}, the series $Z(\bbP^1,bs)$ is holomorphic for $s > 2/b$.
Then $F_1(s)$ is holomorphic in $D$ because 
we have $2/b < 1/a$ from $b > ae \geq 2a$.

The second part is
\[
 F_2(s) = \frac{\alpha |\Delta|^{-\frac{as}{2}}}{as-2}\sum_{Q \in \bbP^1(F)}H_{\bbP^1}(Q)^{-(bs-e)} = \frac{\alpha |\Delta|^{-\frac{as}{2}}}{as-2}Z\bigl(\bbP^1,bs-e\bigr),
\]
Therefore $F_2(s)$ has its largest pole at $s=2/a$ with order $1$ and residue $\alpha a^{-1} |\Delta|^{-1} Z\bigl(\bbP^1, 2b/a - e\bigr)$.

The third part is
\[
 F_3(s) = |\Delta|^{1-as}\sum_{Q \in \bbP^1(F)}H_{\bbP^1}(Q)^{-(bs-e)}\int_{E_{-}}N(L)^{as-2}\varphi\bigl(L\oplus(L\otimes Q^*\calO_{\bbP^1}(-e))\bigr)dL.
\]
Since the line bundle $Q^*\calO_{\bbP^1}(-e)$ is embedded into $V^{\otimes e}$, we have an estimate
\[
 \int_{E_{-}}\bigl|N(L)^{as-2}\bigr|\varphi\bigl(L\oplus(L\otimes Q^*\calO_{\bbP^1}(-e))\bigr)dL \leq \int_{E_{-}}\bigl|N(L)^{as-2}\bigr|\varphi\bigl(L\oplus(L\otimes V^{\otimes e})\bigr)dL
\]
where the latter integral converges for any $s \in \bbC$ by Theorem \ref{prop:R-R and vanishing} (ii) and their values are bounded with respect to $Q$.
Therefore the third part has no pole in the domain $D$.

\subsection{The fourth part}
In general, for two Arakelov vector bundles $V,W$ we have $\varphi(V\oplus W) = \varphi(V) + \varphi(W) + \varphi(V)\varphi(W)$ which follows from $\#H^0(V\oplus W) = \#H^0(V)\#H^0(W)$.
Then the fourth part $F_4(s)$ is divided into the sum $F_4^{(1)}(s) + F_4^{(2)}(s) + F_4^{(3)}(s)$ where
\begin{align*}
 F_4^{(1)}(s) &= \sum_{Q \in \bbP^1(F)}H_{\bbP^1}(Q)^{-bs}\int_{E_{-}}N(L)^{-as}\varphi(L)dL, \\
 F_4^{(2)}(s) &= \sum_{Q \in \bbP^1(F)}H_{\bbP^1}(Q)^{-bs}\int_{E_{-}}N(L)^{-as}\varphi\bigl(L\otimes Q^*\calO_{\bbP^1}(e)\bigr)dL, \\
  F_4^{(3)}(s) &= \sum_{Q \in \bbP^1(F)}H_{\bbP^1}(Q)^{-bs}\int_{E_{-}}N(L)^{-as}\varphi\bigl(L)\varphi\bigl(L\otimes Q^*\calO_{\bbP^1}(e)\bigr)dL.
\end{align*}

\subsubsection*{I}
The series $F^{(1)}_4(s)$ is the product of $Z\bigl(\bbP^1,bs\bigr)$ with a holomorphic function.
In particular, it has no pole in the domain $D$.

\subsubsection*{II}
In the series $F^{(2)}_4(s)$,
replacing $L \otimes Q^*\calO_{\bbP^1}(e)$ to $L$ yields
\begin{align*}
 F_4^{(2)}(s) &= \sum_{Q \in \bbP^1(F)}H_{\bbP^1}(Q)^{-(b-ae)s}\int_{N(L)\leq\sqrt{|\Delta|}H_{\bbP^1}(Q)^e}N(L)^{-as}\varphi(L)dL \\
 &= wZ\bigl(\bbP^1,(b-ae)s\bigr)\xi(as) - F^{(2)'}_4(s)
\end{align*}
where
\[
 F^{(2)'}_4(s) = \sum_{Q \in \bbP^1(F)}H_{\bbP^1}(Q)^{-(b-ae)s}\int_{N(L)\geq\sqrt{|\Delta|}H_{\bbP^1}(Q)^e}N(L)^{-as}\varphi(L)dL. 
\]
By the Riemann-Roch therem, $\varphi(L) = \varphi(L^{\vee}\otimes\omega)N(L)|\Delta|^{-\frac{1}{2}} + N(L)|\Delta|^{-\frac{1}{2}} - 1$.
Then $F^{(2)'}_4$ is divided as follows:
\begin{align*}
 F^{(2)'}_4(s)
 &= |\Delta|^{-\frac{1}{2}}\sum_{Q \in \bbP^1(F)}H_{\bbP^1}(Q)^{-(b-ae)s}\int_{N(L)\geq\sqrt{|\Delta|}H_{\bbP^1}(Q)^e}N(L)^{-as+1}\varphi(L^{\vee}\otimes\omega)dL \\
 &+ |\Delta|^{-\frac{1}{2}}\sum_{Q \in \bbP^1(F)}H_{\bbP^1}(Q)^{-(b-ae)s}\int_{N(L)\geq\sqrt{|\Delta|}H_{\bbP^1}(Q)^e}N(L)^{-as+1}dL \\
 &- \sum_{Q \in \bbP^1(F)}H_{\bbP^1}(Q)^{-(b-ae)s}\int_{N(L)\geq\sqrt{|\Delta|}H_{\bbP^1}(Q)^e}N(L)^{-as}dL.
\end{align*}
The integrals in the second and third term can be explicitely computed by the formula \\ $\int_{\Pic(F)}f(\deg L)dL = \alpha \int_{\bbR}f(t)dt$ in \S\ref{subs:Topologies and measures}.
The results are
\begin{align*}
 \int_{N(L)\geq\sqrt{|\Delta|}H_{\bbP^1}(Q)^e}N(L)^{-as+1}dL &= \frac{\alpha \sqrt{|\Delta|}^{1-as}H_{\bbP^1}(Q)^{e(1-as)}}{as-1}\quad\text{for }\Re(s)>\frac{1}{a}, \\
 \int_{N(L)\geq\sqrt{|\Delta|}H_{\bbP^1}(Q)^e}N(L)^{-as}dL &=
 \frac{\alpha\sqrt{|\Delta|}^{-as}H_{\bbP^1}(Q)^{-eas}}{as}\quad\text{for }\Re(s)>0.
\end{align*}
From this it is easily checked that the largest pole of the second (resp. third) term is at $s = (e+2)/b$ (resp. $s = 2/b$).

The first term in the above expression of $F^{(2)'}_4(s)$ is holomorphic for $\Re(s) > 1/a$.
In fact, since $N(L^{\vee}\otimes\omega) \leq \sqrt{|\Delta|}H_{\bbP^1}(Q)^{-e}$, we have $\varphi(L^{\vee}\otimes\omega) \leq C_1 \exp \bigl(-C_2 H_{\bbP^1}(Q)^{C_3}\bigr)$ for some $C_1,C_2,C_3 > 0$ by Proposition \ref{prop:R-R and vanishing} (ii).
Note also that the value of the integral $\int_{N(L)\geq\sqrt{|\Delta|}H_{\bbP^1}(Q)^e}N(L)^{-as+1}dL$ is bounded with respect to $Q$ (for $\Re(s) > 1/a$).
From these facts one can easily see that the first term converges absolutely for $\Re(s) > 1/a$.

We have seen that the pole of the series $F^{(2)}_4(s)$ in $D$ arises from the term $wZ\bigl(\bbP^1,(b-ae)s\bigr)\xi(as)$.
Its largest pole is 
at $s = 2/(b-ea)$ (when $2/(b-ea) \in D$).
It is simple and the residue is given by
\[
 \lim_{s \to 2/(b-ae)}w\biggl(s-\frac{2}{b-ae}\biggr)Z\bigl(\bbP^1,(b-ae)s\bigr)\xi(as) = \frac{\xi(\frac{2a}{b-ae})}{b-ae} \cdot \frac{\alpha N(V)}{|\Delta| \xi(2)}.
\]

\subsubsection*{III}
Now we consider the last part $F_4^{(3)}(s)$.
Divide $E_{-}$ into $E'_{-}(Q) \cup E''_{-}(Q)$ where 
$E_{-}'(Q) = \{L \in E_{-} \mid N(L)H(Q)^{e/2} \leq \sqrt{|\Delta|}\}$ and
$E_{-}''(Q) = \{L \in E_{-} \mid N(L)H(Q)^{e/2} \geq \sqrt{|\Delta|} \}$.
Associated to this, we divide the series $F^{(3)}_4(s)$ into $F^{(3)'}_4(s) + F^{(3)''}_4(s)$ where
\begin{align*}
 F_4^{(3)'}(s) &= \sum_{Q \in \bbP^1(F)}H_{\bbP^1}(Q)^{-bs}\int_{E'_{-}(Q)}N(L)^{-as}\varphi\bigl(L)\varphi\bigl(L\otimes Q^*\calO_{\bbP^1}(e)\bigr)dL, \\
 F_4^{(3)''}(s) &= \sum_{Q \in \bbP^1(F)}H_{\bbP^1}(Q)^{-bs}\int_{E''_{-}(Q)}N(L)^{-as}\varphi\bigl(L)\varphi\bigl(L\otimes Q^*\calO_{\bbP^1}(e)\bigr)dL.
\end{align*}

First we consider $F^{(3)'}_4(s)$, so assume that $L \in E_{-}'(Q)$.
Then $N(L) \leq \sqrt{|\Delta|} H(Q)^{-e/2}$,
and by Proposition \ref{prop:R-R and vanishing} (ii),
$\sqrt{\varphi(L)} \leq C_1\exp(-C_2H(Q)^{C_3})$ for some $C_1,C_2,C_3 > 0$.
Moreover, we have $N\bigl(L \otimes Q^*\calO_{\bbP^1}(e)\bigr) \leq \sqrt{|\Delta|}H(Q)^{e/2}$ and $\varphi\bigl(L \otimes Q^*\calO_{\bbP^1}(e)\bigr) \leq C_4H(Q)^{e/2}$ for some $C_4 > 0$ by Proposition \ref{prop:R-R and vanishing} (iii).
Therefore, we have an estimate
\begin{align*}
 &\Bigl|H_{\bbP^1}(Q)^{-bs}N(L)^{-as}\varphi(L)\varphi\bigl(L\otimes Q^*\calO_{\bbP^1}(e)\bigr)\Bigr| \\
 &\leq C_5\Bigl|H_{\bbP^1}(Q)^{-bs+e/2}\exp\bigl(-C_2H_{\bbP^1}(Q)^{C_3}\bigr)\Bigr|\bigl|N(L)^{-as}\sqrt{\varphi(L)}\bigr|.
\end{align*}
for some $C_5 > 0$.
Since $E_{-}'(Q) \subset E_{-}$, the integral $\int_{E_{-}'(Q)}N(L)^{-as}\sqrt{\varphi(L)}dL$ converges for any $s \in \bbC$ and their values are bounded with respect to $Q$.
So the series $F^{(3)'}_4(s)$ converges for any $s \in \bbC$.

Next we work with $F^{(3)''}_4(s)$.
By the Riemann-Roch theorem,
\begin{align*}
 F^{(3)''}_4(s) &= |\Delta|^{-\frac{1}{2}}\sum_{Q \in \bbP^1(F)}H_{\bbP^1}(Q)^{-(bs-e)}\int_{E''_{-}(Q)}N(L)^{-(as-1)}\varphi(L)\varphi\bigl(L^{\vee}\otimes Q^*\calO_{\bbP^1}(-e)\otimes\omega\bigr)dL \\
 &+ |\Delta|^{-\frac{1}{2}}\sum_{Q \in \bbP^1(F)}H_{\bbP^1}(Q)^{-(bs-e)}\int_{E''_{-}(Q)}N(L)^{-(as-1)}\varphi(L)dL \\
 &- \sum_{Q \in \bbP^1(F)}H_{\bbP^1}(Q)^{-bs}\int_{E''_{-}(Q)}N(L)^{-as}\varphi(L)dL.
\end{align*}
All of three integrals in the above expression converges for any $s \in \bbC$ and its value is bounded with respect to $Q$.
So the series $F^{(3)''}_4(s)$ is holomorphic in the domain $D$.

\subsection{Conclusion}
We have proved the following theorem.
\begin{thm}
 \label{thm:Hirzebruch main theorem}
 The height zeta function $Z(F_e, s) = \sum_{P \in F_e(F)}H_{a,b}(P)^{-s}$ defines a meromorphic function in the domain $D = \bigl\{s \in \bbC \bigm| \Re(s) > \max\{1/a,(e+2)/b\} \bigr\}$.
 It has a simple pole at $s = 2/a$ with residue
 \[
  \frac{\alpha Z(\bbP^1,2b/a-e)}{a w |\Delta| \xi(2)},
 \]
 and another simple pole at $s = 2/(b-ea)$ with residue
 \[
  \frac{\alpha N(V)}{(b-ae)w|\Delta|\xi(2)}.
 \]
 when $2/(b-ea) \in D$.
 There are no other poles in $D$.
\end{thm}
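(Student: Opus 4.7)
The plan is to reduce the problem to the projective space case of Theorem \ref{thm:main theorem}, using the projection $\pi \colon F_e \lra \bbP^1$ to express $Z(F_e,s)$ as a sum over $Q \in \bbP^1(F)$ of height zeta functions on fibers. First I would establish the multiplicativity identity
\[
 H_{a,b}(P) = H_{\pi^{-1}(Q)}(P)^a H_{\bbP^1}(Q)^b \quad (Q = \pi(P)),
\]
which follows directly from the definition of $\calO(a,b)$ and the functoriality of Arakelov norms. Summing over $P$ above each $Q$ yields
\[
 Z(F_e,s) = \sum_{Q \in \bbP^1(F)} H_{\bbP^1}(Q)^{-bs} Z\bigl(\pi^{-1}(Q),as\bigr),
\]
where the fiber $\pi^{-1}(Q)$ is the projective line associated to the Arakelov bundle $Q^*\bigl(\calO_{\bbP^1}\oplus\calO_{\bbP^1}(e)\bigr)$.

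Next I would apply the integral expression (\ref{eq:star}) (with $r = 2$) to each $w\xi(as)Z\bigl(\pi^{-1}(Q),as\bigr)$, writing it as a sum of four pieces: an explicit term $-\alpha|\Delta|^{-as/2}/(as)$, a second explicit term $\alpha H_{\bbP^1}(Q)^e|\Delta|^{-as/2}/(as-2)$ (using $\det\bigl(Q^*(\calO\oplus\calO(e))\bigr) = Q^*\calO(e)$, so its norm is $H_{\bbP^1}(Q)^e$), and two integrals $G_{3,Q},G_{4,Q}$ of $\varphi$ over $E_-$. Summing over $Q$ against $H_{\bbP^1}(Q)^{-bs}$ produces four series $F_1,\dots,F_4$. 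The series $F_1(s)$ is proportional to $Z(\bbP^1,bs)$ which, since $b>ae\geq 2a$ gives $2/b < 1/a$, is holomorphic throughout $D$. The series $F_2(s)$ is $\alpha|\Delta|^{-as/2}(as-2)^{-1} Z(\bbP^1,bs-e)$, which gives precisely the first pole at $s=2/a$ with the claimed residue upon dividing by $w\xi(as)$. The term $F_3(s)$ is controlled uniformly in $Q$ by embedding $Q^*\calO_{\bbP^1}(-e)$ into $V^{\otimes e}$ and invoking the doubly-exponential vanishing Proposition \ref{prop:R-R and vanishing}(ii), yielding holomorphy on $D$.

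The heart of the argument is $F_4(s)$, which is handled by writing $\varphi(V\oplus W) = \varphi(V) + \varphi(W) + \varphi(V)\varphi(W)$ and analyzing the three resulting pieces. The part $F_4^{(1)}$ factors as a holomorphic function times $Z(\bbP^1,bs)$, hence is holomorphic in $D$. The part $F_4^{(2)}$ is the source of the second pole: after the change of variables $L \mapsto L \otimes Q^*\calO_{\bbP^1}(e)$, it becomes $wZ(\bbP^1,(b-ae)s)\xi(as)$ minus a tail integral over $\{N(L) \geq \sqrt{|\Delta|}H_{\bbP^1}(Q)^e\}$; applying Riemann-Roch to convert $\varphi(L)$ in the tail leaves an explicitly computable piece whose largest pole is at $s=(e+2)/b$ (on the boundary of $D$), plus pieces that converge for $\Re(s)>1/a$ by the vanishing estimate. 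Hence the only pole of $F_4^{(2)}$ inside $D$ comes from $Z(\bbP^1,(b-ae)s)$ at $s=2/(b-ea)$, with residue $\alpha N(V)/((b-ae)|\Delta|\xi(2))$ after dividing by $w\xi(as)$.

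The hard part will be $F_4^{(3)}$, involving the product $\varphi(L)\varphi\bigl(L\otimes Q^*\calO_{\bbP^1}(e)\bigr)$. I would split $E_-$ at the threshold $N(L)H_{\bbP^1}(Q)^{e/2} = \sqrt{|\Delta|}$. On $E_-'(Q)$ the factor $\varphi(L)$ is doubly-exponentially small in $H_{\bbP^1}(Q)$ by Proposition \ref{prop:R-R and vanishing}(ii), while $\varphi(L \otimes Q^*\calO(e))$ is controlled linearly by part (iii); the resulting double-exponential decay in $Q$ gives convergence for all $s \in \bbC$. On $E_-''(Q)$ the roles are reversed: applying Riemann-Roch to $L \otimes Q^*\calO(e)$ converts its $\varphi$ into a manageable combination whose integral is uniformly bounded in $Q$, so the series reduces to holomorphic multiples of $Z(\bbP^1,(b-e)s)$ and $Z(\bbP^1,bs)$, both holomorphic on $D$. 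Collecting all contributions and dividing by $w\xi(as)$ produces exactly the two poles claimed, with the stated residues, and no others in $D$.
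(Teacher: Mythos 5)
Your plan reproduces the paper's proof almost step for step: the same fiber decomposition over $\bbP^1(F)$, the same four-term expansion of $w\xi(as)Z\bigl(\pi^{-1}(Q),as\bigr)$ from (\ref{eq:star}) with $r=2$, the same handling of $F_1,F_2,F_3$, the same splitting $\varphi(V\oplus W)=\varphi(V)+\varphi(W)+\varphi(V)\varphi(W)$ of $F_4$, the same change of variables plus Riemann--Roch for $F_4^{(2)}$, and the same cut of $E_-$ at $N(L)H_{\bbP^1}(Q)^{e/2}=\sqrt{|\Delta|}$ for $F_4^{(3)}$; the pole and residue bookkeeping for $F_2$ and $F_4^{(2)}$ is as in the paper.

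The one step whose justification does not close as written is $F_4^{(3)'}$. On $E_-'(Q)$ you bound $\varphi(L)$ only by its ($Q$-dependent) supremum, via Proposition \ref{prop:R-R and vanishing}(ii), and bound $\varphi\bigl(L\otimes Q^*\calO_{\bbP^1}(e)\bigr)$ by Proposition \ref{prop:R-R and vanishing}(iii); the remaining $L$-integral is then dominated only by $\int_{E_-'(Q)}N(L)^{c-\Re(as)}dL$ with $c\in\{0,1\}$, which diverges as $\deg L\to-\infty$ once $\Re(as)>1$, i.e., exactly on $D$. So the "double-exponential decay in $Q$" alone does not give convergence of the inner integral; you must also retain decay in $L$. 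The paper's device is to split $\varphi(L)=\sqrt{\varphi(L)}\cdot\sqrt{\varphi(L)}$: one factor is bounded by $C_1\exp\bigl(-C_2H_{\bbP^1}(Q)^{C_3}\bigr)$ using $N(L)\le\sqrt{|\Delta|}H_{\bbP^1}(Q)^{-e/2}$, while the other stays in the integrand so that $\int_{E_-}N(L)^{-as}\sqrt{\varphi(L)}\,dL$ converges for every $s$ and is bounded in $Q$ (and (iii) gives $\varphi\bigl(L\otimes Q^*\calO_{\bbP^1}(e)\bigr)\le C_4H_{\bbP^1}(Q)^{e/2}$ on $E_-'(Q)$). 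Two further slips to fix: in the $E_-''(Q)$ part the dominating series is $Z(\bbP^1,bs-e)$ (pole at $s=(e+2)/b$, on the boundary of $D$), not $Z(\bbP^1,(b-e)s)$, which for some ample $(a,b)$ (e.g.\ $e=2$, $a=1$, $b=3$) would have a pole inside $D$; and the residue of $F_4^{(2)}/\bigl(w\xi(as)\bigr)$ at $s=2/(b-ea)$ is $\alpha N(V)/\bigl((b-ae)w|\Delta|\xi(2)\bigr)$ --- you dropped the factor $w$, and it is the value with $w$ that matches the theorem.
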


\subsection{Discussions}
\label{ssect:Discussions}
Consider the minimal section of $\pi \colon F_e = \bbP\bigl(\calO\oplus\calO(e)\bigr) \lra \bbP^1$ which corresponds to a line-subbundle $\calO(e) \subset \calO\oplus\calO(e)$.
Let $C \subset F_e$ be its image.
Then the above result shows that the points on $C$ are dominant in the distribution of rational points of $F_e$ when $2/(b-ea) > 2/a \iff b < (e+1)a$.
In fact, the restriction $\calO(a,b)|_C$ is isomorphic to $\calO_{\bbP^1}(b-ea)$ if we identify $C$ and $\bbP^1$ via the projection.
Therefore by Theorem \ref{thm:main theorem} the height zeta function of $C$ with respect to $\calO(a,b)|_C$ is equal to $Z\bigl(\bbP^1,(b-ea)s\bigr)$
and the principal part of its largest pole
is equal to that of $Z(F_e,s)$ when $b<(e+1)a$.

Before considering the case $b \geq (e+1)a$,
we recall the conjecture of Batyrev and Manin (Conjecture B of \cite{BatyrevManin})
on the abscissa of convergence of height zeta functions of Fano varieties:
Let $X$ be a Fano variety over $F$, and $L$ be an ample line bundle on $X$.
Let $\alpha(L) = \inf\{A \in \bbR \mid AL + K_X \in \Lambda_{\text{eff}}\}$
where $\Lambda_{\text{eff}} \subset \Pic(X)_{\bbR}$ denotes the effective cone of $X$.
Then for any sufficiently big finite extension $F' \supset F$ and for any sufficiently small Zariski open subset $U \subset X$,
the abscissa of convergence of $Z(U_{F'},H_{L};s)$ is equal to $\alpha(L)$.

Now return to our case $X=F_e$ and $b \geq (e+1)a$.
It is known that the canonical sheaf $K_{F_e}$ is isomorphic to $\calO(-2,-2-e)$ (see the section of ruled surfaces in \cite{Hartshorne}).
Also recall that the effective cone $\Lambda_{\text{eff}} \subset \Pic(F_e)_{\bbR} \cong \bbR^2$ is $\{(a,b) \in \bbR^2 \mid a \geq 0, b \geq ae \}$.
Therefore the number $2/a$ (i.e., the abscissa of convergence of the height zeta function $Z(F_e,s)$) is equal to
$\inf\{A \in \bbR \mid A\calO(a,b) + K_{F_e} \in \Lambda_{\text{eff}}\}$.

As a whole, Batyrev and Manin's description of the abscissa of convergence is valid for Hirzebruch surfaces $F_e \ (e \geq 2)$ with respect to ample line bundles $L=\calO(a,b) \ \bigl(b \geq (e+1)a\bigr)$
without shrinking $F_e$ to a small open subset.

\section{Tauberian theorem}
We include here the statement of Tauberian theorem
used to deduce the asymptotic behaviour of the counting function $n(X,H_L;B)$ from properties of the height zeta function $Z(X,H_L;s)$.

\begin{thm}[Tauberian theorem]
\label{thm:Tauberian}
 Let $\{\lambda_n\}_{n \in \bbN}$ be a non-decreasing sequence of positive real numbers.
 Let $a \in \bbR$ be a positive real number and $b \in \bbN$ a positive integer.
 Assume that the Dirichlet series $Z(s) = \sum_{n \in \bbN}\lambda_n^{-s}$ admits a representation
 \[
  Z(s) = \frac{g(s)}{(s-a)^b} + h(s)
 \]
 on some open neighbourhood of $\bigl\{ s \in \bbC \bigm| \Re(s) \geq a \bigr\}$, where $g(s)$ and $h(s)$ are holomorphic functions such that $g(a) \neq 0$.
 Then
 \[
  N(B) = \#\{n \in \bbN \mid \lambda_n \leq B\} = \frac{g(a)}{a(b-1)!}B^{a}(\log B)^{b-1}\bigl(1+o(1)\bigr) \quad \text{as } B \to \infty.
 \]
\end{thm}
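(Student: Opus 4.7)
The plan is to prove the theorem by the classical Mellin-inversion/Wiener--Ikehara approach, in the form extended by Delange to handle poles of arbitrary order $b$. The starting point is the identity
\[
 Z(s) = s\int_{0}^{\infty} N(B)\,B^{-s-1}\,dB
\]
valid for $\Re(s) > a$, obtained from $\lambda_n^{-s} = s\int_{\lambda_n}^{\infty} B^{-s-1}\,dB$ and Fubini. Formal Mellin inversion (Perron's formula) then expresses
\[
 N(B) = \frac{1}{2\pi i}\int_{c-i\infty}^{c+i\infty} Z(s)\,\frac{B^s}{s}\,ds
\]
for any $c > a$, and the strategy is to evaluate this integral by shifting the contour past the pole at $s=a$.

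Using the hypothesis, I would split the integrand as $g(s)B^s/\bigl(s(s-a)^b\bigr)$ plus a piece holomorphic on some neighbourhood of $\Re(s) \geq a$, and deform the contour to a vertical line $\Re(s) = a - \epsilon$ inside that neighbourhood. The residue picked up at $s=a$ is
\[
 \Res_{s=a}\frac{g(s)B^s}{s(s-a)^b} = \frac{1}{(b-1)!}\biggl[\frac{d^{b-1}}{ds^{b-1}}\biggl(\frac{g(s)B^s}{s}\biggr)\biggr]_{s=a}.
\]
Expanding via the Leibniz rule, the dominant term in $B$ comes from differentiating the factor $B^s$ the maximal $b-1$ times, producing exactly $g(a)B^a(\log B)^{b-1}/\bigl(a(b-1)!\bigr)$; all other terms contribute $O\bigl(B^a(\log B)^{b-2}\bigr)$, which will be absorbed into the claimed $o(1)$ error.

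The real work is in showing that the shifted contour contributes only a lower-order error, and this is where the main obstacle lies. The hypotheses provide no growth bound on $Z(s)$ along vertical lines, so the Perron integral need not converge absolutely and the contour shift requires extra justification. The standard workaround is a smoothing-and-desmoothing argument: replace $N(B)$ by a $k$-fold Cesàro-type average such as $N_k(B) = \frac{1}{k!}\sum_{\lambda_n \leq B}(1 - \lambda_n/B)^{k}$, whose Mellin transform carries extra decay factors $\Gamma(s)\Gamma(k+1)/\Gamma(s+k+1)$ in the $s$ variable. For $k$ sufficiently large, the resulting Perron integral converges absolutely, the contour shift is legitimate, and the residue computation above yields the asymptotic of $N_k(B)$. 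One then recovers the asymptotic of $N(B)$ itself from that of $N_k(B)$ using the monotonicity of $\{\lambda_n\}$, by sandwiching $N(B)$ between $N_k(B(1\pm\delta))$ and letting $\delta \to 0$. Equivalently, one can simply invoke Delange's generalization of the Wiener--Ikehara Tauberian theorem, which packages precisely this argument for poles of order $b$ and is the form in which such results are usually cited in the height zeta function literature.
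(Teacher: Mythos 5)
Your fallback conclusion (``simply invoke Delange's generalization of the Wiener--Ikehara theorem'') is in fact the paper's entire proof: the paper applies Delange's Th\'eor\`eme III to $\alpha(t) = N(e^t)$, using exactly your identity $Z(s)/s = \int_0^{\infty} N(B)B^{-s-1}\,dB$. But the route you actually sketch --- Perron's formula plus a contour shift to $\Re(s) = a-\epsilon$, rescued by Ces\`aro/Riesz smoothing --- has a genuine gap under the stated hypotheses. The theorem only assumes that $Z(s) - g(s)(s-a)^{-b}$ is holomorphic on \emph{some} open neighbourhood of the closed half-plane $\{\Re(s) \geq a\}$; such a neighbourhood may pinch as $|\Im(s)| \to \infty$, so it need not contain any full vertical line $\Re(s) = a - \epsilon$, and no growth estimate whatsoever is given for $Z$ off the half-plane (nor on the boundary line). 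Smoothing with factors $1/\bigl(s(s+1)\cdots(s+k)\bigr)$ improves decay in $\Im(s)$ only relative to a bound on $|Z|$ along the shifted contour, and here there is no such bound and possibly no such contour; so the claim ``for $k$ sufficiently large \ldots the contour shift is legitimate'' cannot be justified from the hypotheses.

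This is also why your parenthetical description of Delange's theorem as ``packaging precisely this argument'' is not accurate: Delange's proof (like Wiener--Ikehara's) is a Fourier-analytic argument with approximate-identity kernels that uses only the boundary behaviour of $Z(s)/s$ on $\Re(s) = a$ after removing the polar part, and never shifts a contour into the left half-plane. That is exactly what makes it applicable under the weak regularity assumed here. So either quote Delange's Th\'eor\`eme III directly (checking that $N(e^t)$ is non-decreasing and that $f(s) = Z(s)/s$ has the required form $g(s)/\bigl(s(s-a)^b\bigr)$ plus a function holomorphic near $\Re(s) \geq a$ with $g(a)/a \neq 0$), as the paper does, or replace the contour-shift step by a genuine Wiener--Ikehara-type kernel argument; as written, the residue computation is fine but the error-term analysis does not go through.
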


\begin{proof}
 Apply Theorem III in \cite{Delange} to $\alpha(t) = N(e^t)$.
 Note that $f(s)$ in \cite{Delange} is equal to $Z(s)/s$.
\end{proof}

\bibliographystyle{plain}
\bibliography{references}
\end{document}